\documentclass[11pt]{amsart}

\usepackage{amsmath,amsfonts,amssymb,enumerate}
\usepackage{amsthm,dsfont}
\usepackage{latexsym, mathtools,verbatim,graphicx}
\usepackage{amscd,hyperref}
\usepackage{bbold}
\usepackage{bbm}
\usepackage{color}
\bibliographystyle{siam}

\addtolength{\textheight}{.4in}
\addtolength{\topmargin}{-.2in}
\addtolength{\textwidth}{1.6in}
\addtolength{\oddsidemargin}{-.7in}
\addtolength{\evensidemargin}{-.7in}
\parindent=12pt

\theoremstyle{plain}
\newtheorem{theorem}{Theorem}[section]
\newtheorem{thm}[theorem]{Theorem}

\newtheorem{corollary}[theorem]{Corollary}
\newtheorem{lem}[theorem]{Lemma}
\newtheorem{prop}[theorem]{Proposition}
\theoremstyle{definition}

\newtheorem{defn}[theorem]{Definition}

\theoremstyle{remark}
\newtheorem{rem}[theorem]{Remark}

\newtheorem*{PNT}{{\rm \textbf{Prime Number Theorem}}}

\newenvironment{demode}
  {\noindent {{\it Proof of }}}%
  {\hfill \fbox{}}

\newcommand{\R}{\mathbb{R}}
\newcommand{\C}{\mathbb{C}}

\newcommand{\D}{\mathbb{D}}
\newcommand{\N}{\mathbb{N}}
\newcommand{\1}{\mathds{1}}
\newcommand{\s}{\sigma}

\DeclareMathOperator*{\Span}{span}
\newcommand{\Sum}{{\rm Sum}}

\begin{document}
\title[Zero-free regions of the Riemann zeta function]{Zero-free regions of the Riemann zeta function and approximation in weighted Dirichlet spaces}
\author[Gallardo-Guti\'errez]{Eva A. Gallardo-Guti\'errez}
\address{Eva A. Gallardo-Guti\'errez \newline Departamento de An\'alisis Matem\'atico y Matem\'atica Aplicada,\newline
Facultad de Matem\'aticas,
\newline Universidad Complutense de
Madrid, \newline
 Plaza de Ciencias N$^{\underbar{\Tiny o}}$ 3, 28040 Madrid,  Spain
 \newline
and Instituto de Ciencias Matem\'aticas ICMAT (CSIC-UAM-UC3M-UCM),
\newline Madrid,  Spain }\email{eva.gallardo@mat.ucm.es}

\author[Seco]{Daniel Seco}
\address{Daniel Seco \newline Departamento de An\'alisis Matem\'atico e IMAULL \newline Universidad de la Laguna \newline  Avenida Astrof\'isico Francisco S\'anchez, s/n.  \newline 38206 San Crist\'obal de La Laguna \newline
Santa Cruz de Tenerife,  Spain} \email{dsecofor@ull.edu.es}

\begin{abstract}
We study zero-free regions of the Riemann zeta function $\zeta$ related to an approximation problem in the weighted Dirichlet space $D_{-2}$ which is known to be equivalent to the Riemann Hypothesis since the work of B\'aez-Duarte. We prove, indeed, that analogous approximation problems for the standard weighted Dirichlet spaces $D_{\alpha}$ when $\alpha \in (-3,-2)$ give conditions so that the half-plane $\{s \in \C: \Re (s) > -\frac{\alpha+1}{2}\}$ is also zero-free for $\zeta$. Moreover, we extend such results to a large family of weighted spaces of analytic functions $\ell^p_{\alpha}$. As a particular instance, in the limit case $p=1$ and $\alpha=-2$, we provide a new equivalent formulation of the Prime Number Theorem.
\end{abstract}

\thanks{We acknowledge financial support by the Spanish Ministry of Science and Innovation through the Plan Nacional grants PID2019-106433GB-I00, PID2019-105979GB-I00, PID2022-137294NB-I00 and PID2023-149061NA-I00. The first author also acknowledges support from ``Severo Ochoa Programme for Centres of Excellence in R\&D'' (``Ayuda extraordinaria a
Centros de Excelencia Severo Ochoa') of the Ministry of Economy and Competitiveness of Spain, funded by the European Regional Development Fund and from the Spanish National Research Council (grants CEX2019-000904-S and 20205CEX001). In addition, the second author is funded by the Ram\'on y Cajal programme from Agencia Estatal de Investigaci\'on through grant RYC2021-034744-I, and by the Madrid Government (Comunidad de Madrid-Spain) under the Multiannual Agreement with UC3M in the line of Excellence of University Professors (EPUC3M23), and in the context of the V PRICIT (Regional Programme of Research and Technological Innovation). }

\subjclass{Primary 30H20; Secondary 11M26.}

\keywords{Riemann zeta function, weighted Dirichlet spaces, cyclic vectors}

\date{\today}

\maketitle

\section{Introduction and prelimnaries}

The Riemann zeta function $\zeta$ is a very classical object in Mathematics and its links to various properties in Number Theory is well established nowadays. Given a complex number $s \in \C$ with real part $\Re(s) > 1$, the Riemann zeta function is defined by
\[\zeta(s):=\sum_{n=1}^{\infty} n^{-s}.\]
For other values of $s \in \C \backslash \{1\}$, it admits a unique analytic continuation which is also commonly denoted by $\zeta$.

\smallskip

As shown by Riemann, the localization of the zeros of $\zeta$ has deep connections with the distribution of the prime numbers. In particular, the famous Riemann Hypothesis (RH) can be described in terms of some error bounds for how the logarithmic integral is different from the number of primes up to a number $x$,  $\pi(x)$. Recall that (RH) asserts that those zeros of $\zeta$ that are not negative integers, namely the \emph{nontrivial zeros}, lie on the \emph{critical line}
\[\{s \in \C : \Re(s) = 1/2\}.\]
Although there are some positive evidences of this conjecture, the problem at large remains open.

\smallskip

It is well known that the nontrivial zeros are actually symmetric with respect to the critical line, and in terms of their real parts the only thing known to date is that they lie in the strip  $$\{s \in \C : \Re(s) \in (0,1)\}.$$

A function that is strongly associated with the distribution of the zeros of the Riemann zeta function is the so-called M\"obius function $\mu$. Recall that $\mu$ is the arithmetic function defined on the set of positive integers $\N$ taking values in $\{-1,0,1\}$, namely, $\mu : \N \rightarrow \{-1,0,1\}$, where $\mu(1)=1$, each square-free  factorization natural number $k$ is mapped to $(-1)^{\omega(k)}$ where $\omega(k)$ is the number of prime factors of $k$ and  each $k$ divisible by a square prime, is mapped to 0. We refer to \cite{BateDia} for this and many other related classical topics in Analytic Number Theory.

\smallskip

Our main aim in this work is establishing sufficient conditions on the behavior of sums related to the M\"obius function which guarantee that the Riemann zeta function has zero-free regions of the complex plane $\C$. In this regard, our work is strongly influenced by the classical ones in this context by Nyman \cite{Nym}, Beurling \cite{Beurling1955}, B\'aez-Duarte \cite{BaezDuarte, BaDu1}, Balazard-Saias \cite{BalazardSaias4} and the recent one by Waleed Noor \cite{WaleedNoor}. In this framework, the classical theorem of Beurling \cite{Beurling1955} states  that the property that the region
\[\Omega_t := \{s \in \C: \Re(s) >t\}\]
contains no zeros of $\zeta$ is equivalent to the fact that the constant function $1$ lies in the closure of a particular family of functions in $L^{1/t}(0,1)$, where the interesting case is when $p=1/t \in [1,2]$ (the case $p=2$ was already considered in Nyman's thesis).

\medskip

More concretely, if we define for $k \geq 2, n \in \N$, the functions
\begin{equation}\label{rk}
r_k(n) = k \left\{\frac{n}{k}\right\},
\end{equation}
where $\{ \cdot \}$ denotes the fractional part,  B\'aez-Duarte \cite{BaezDuarte, BaDu1} showed that Beurling's version of the problem could be translated into the problem of approximating arbitrarily close the function $\frac{1}{1-z}$ in the \emph{weighted Dirichlet space} $D_{-2}$ by linear combinations of the family of functions
\[R_k (z) = \sum_{n=1}^\infty r_k(n) z^{n-1},\]
with $k\geq 2$. Recall that the \emph{weighted Dirichlet space} $D_\alpha$, where $\alpha \in \R$, consists of the holomorphic functions $f(z) = \sum_{n=0}^\infty a_n z^n$ in the unit disc $\D$  such that
$$ \|f\|^2_{\alpha}:= \sum_{n=0}^\infty |a_n|^2 (n+1)^{\alpha} < \infty.$$

\medskip

Weighted Dirichlet spaces are particular instances of the so-called  weighted Hardy spaces $H^2(\beta)$ associated to a sequence $\beta=\{\beta_n\}_{n\geq 0}$ introduced by Shields in the seventies in order to study weighted shifts operators \cite{Shields}. They are Hilbert spaces of analytic functions and play an important role in Operator Theory. Particular instances of $\alpha$'s yield classical spaces of analytic functions, namely, for $\alpha=-1$, $D_\alpha$ is the classical Bergman space $A^2$, $\alpha=0$ yields the Hardy space $H^2$ and $\alpha=1$, the Dirichlet space $D$. Note that the continuous inclusion $D_{\alpha_1} \subsetneq D_{\alpha_2 }$ holds for all $\alpha_2 <\alpha_1$. Moreover, when $\alpha >1$ the spaces $D_{\alpha} $ are continuously embedded in the disc algebra $\mathcal{A}$.
We refer to \cite{DS, Duren, EFKMR} for more on these spaces, and to \cite{BaDu1} for the connection mentioned here with $\zeta$.
In our work, a relevant role will be played by $D_\alpha$ when $\alpha \in [-3,-2]$. Broadly speaking, the relevance of translating an approximation problem in a Banach space into an equivalent problem in a Hilbert space radicates in the fact that in Hilbert spaces there are nicer algorithms for optimization, orthogonal projections and other tools based on orthogonality. This allows for all finite dimensional truncations of the problem to be efficiently solved. In this way, problems become more constructive.

\medskip

In 2004, Balazard and Saias \cite{BalazardSaias4} showed that the B\'aez-Duarte criterion was connected with estimating sums defined in terms of the M\"obius function $\mu$.
A (previously known) key fact in their work was that, if $1/(1-z)$ is approximated by $\frac{1}{z}\sum_{k=2}^n c_{k,n} R_k$, then for each $k \geq 2$, the limit as $n$ tends to $\infty$ of $c_{k,n}$ must be a specific value, namely,
\[\lim_{n\to \infty} c_{k,n}= c_{k, \infty} = - \mu(k)/k.\]
In \cite{Wei07}, Weingartner answered a question in \cite{BalazardSaias4} and showed, in particular, that if $c_{k,n}$ are chosen as the coefficients of the orthogonal projections onto the natural finite spaces spanned by $\{R_k\}_{k=2}^n$, then RH is equivalent to the above formula.
\medskip

More recently, Waleed Noor \cite{WaleedNoor} has reformulated further the problem by means of isometries turning B\'aez-Duarte criterion into a question about approximating the constant function $1$ in the Hardy space $H^2$ by means of linear combinations of the sequence of functions $\{h_k\}_{k = 2}^\infty$ given by
\begin{equation}\label{eqhk}h_k(z)= \frac{1}{1-z} \log \left(\frac{1+z+...+z^{k-1}}{k}\right), \quad (z \in \D).\end{equation}

\medskip

In this setting, our main contribution is providing conditions on sums related to the M\"obius function that guarantee the lack of zeros of the Riemann zeta function $\zeta$ in the regions $\Omega_t$.  Our approach is based on finding new ways of expressing the Beurling criterion in other spaces of analytic functions following the spirit of B\'aez-Duarte. Nevertheless, the criteria we exhibit are of a radically different nature to other known criteria associated to the M\"obius function and, in particular, it will allow us to provide a new condition equivalent to the Prime Number Theorem and establish, likewise, an extension of Waleed Noor's Theorem to the setting of weighted Dirichlet spaces.

\smallskip

The rest of the manuscript is organized as follows. We close this introductory section with some preliminaries regarding the spaces of analytic functions in the disc  which will play a key role in order to provide the desired estimates. In Section \ref{section 2}, we prove Theorem \ref{thmA} regarding zero-free regions of the Riemann zeta function and convergence in weighted Dirichlet spaces. If one of the approximation problems in this Theorem holds, the Prime Number Theorem follows (see Subsection \ref{subsection 2}). In Section \ref{section 3} we establish a sufficient condition on the behavior of sums related to the M\"obius function guaranteeing that the Riemann zeta function has zero-free regions within the critical strip (Theorem \ref{thm-3.1}). A key tool in this context is a classical theorem of S. Selberg \cite{Selberg}. In Section \ref{section 4} we consider different approximations of the constant sequence $\mathds{1}$ in $\Span \{r_k: \, k\geq 2\}$ \emph{\`a la} Balazard and Saias. Finally, in Section \ref{section 5}, we present a generalization of Waleed Noor's techniques, by introducing a family of conditions, each of which guarantees a zero-free region for the Riemann $\zeta$ function.

\subsection{Preliminaries}\label{preliminares}
Throughout the rest of the manuscript, $\D$ will denote the unit disc of the complex plane $\C$. The following weighted $\ell^p$ spaces of analytic functions on $\D$ will be essential regarding estimates of sums related to the M\"obius function.

\begin{defn} Let $1 \leq p \leq 2$ and $\alpha \in \R$ be fixed. The space $X^p_{\alpha}$ consists of holomorphic functions $g(z) = \sum_{k=0}^{\infty} a_k z^k$ in $\D$ such that
the norm
$$\|g\|_{p,\alpha}:=\left ( \sum_{k=0}^{\infty} |a_k|^p (k+1)^{\alpha} \right )^{1/p}$$
is finite.
\end{defn}

Note that $X^p_{\alpha}$ are Banach spaces of analytic functions which comprise the introduced weighted Dirichlet spaces when $p=2$, namely, $D_\alpha=X^2_\alpha$, or the Wiener Algebra $\mathcal{W}$, that is, $\mathcal{W}=X^1_0$. In particular, $X^p_\alpha$ are examples of \emph{functional Banach spaces} (see \cite[Definition 1.1]{CowenMcCluer}).

\smallskip

To each function $g$ in $X^{p}_\alpha$ we associate the sequence of Taylor coefficients of $zg(z)$, and define the sequence space $\ell^p_\alpha$   for $1 \leq p \leq 2$ and $\alpha \in \R$ as
\begin{equation}\label{definicion lp}
\ell^p_\alpha :=\{ u=\{u(k)\}_{k\geq1}:\,  g(z):=\sum_{k=1}^\infty u(k) z^{k-1} \in X^{p}_\alpha\}.
\end{equation}
We endow $\ell^p_\alpha$ with the $X^p_{\alpha}$-norm:
$$\|\{u(k)\}_{k\geq 1}\|_{p,\alpha}= \left \| \sum_{k=1}^\infty u(k) z^{k-1} \right \|_{p,\alpha},$$
which turns $\ell^p_\alpha$ into a Banach space. Denoting the sequence with constant value 1 by $\1$, that is, $\1(n)=1$ for all $n\geq 1$, it is clear that $\1 \in \ell^p_\alpha$ for all $\alpha < -1$. Indeed, $\1$ corresponds to the function $\frac{1}{1-z} = \sum_{k=0}^{\infty} z^k$.

\section{Zero-free regions of the Riemann zeta function} \label{section 2}

For $k \geq 2$, recall the $r_k$ function defined on $\N$ introduced in \eqref{rk} by
\begin{equation*}
r_k(n) = k \left\{\frac{n}{k}\right\},\quad (n \in \N).
\end{equation*}
The main goal of this section is proving the following theorem:

\begin{thm}\label{thmA}
Let $1 < p \leq 2$ and $\alpha \in (-1-p,-1-\frac{p}{2}]$. Assume that there exists a sequence of linear combinations $\{S_N=\1+ \sum_{k=2}^N c_{k,N}\, r_k\}_{N=1}^{\infty}$ converging to $0$ in the $X^p_\alpha$-norm.
Then the Riemann zeta function $\zeta$ has no zeros in the half-plane $\Omega_{p,\alpha}:=\{s\in \C:\; \Re(s) > -\frac{\alpha+1}{p}\}$.
Moreover, if the same holds for $p=1$ and $\alpha \in [-2,-3/2]$, then $\zeta$ has no zeros lying on the \emph{closed} half-plane $\Omega_{1,\alpha}:=\{s \in \C:\; \Re(s) \geq -(\alpha+1)\}$.
\end{thm}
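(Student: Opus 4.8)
The plan is to argue by contraposition: I will show that if $\zeta$ has a zero $s_0\in\Omega_{p,\alpha}$, then no sequence $S_N$ as in the hypothesis can converge to $0$, by exhibiting a single bounded linear functional on $X^p_\alpha$ that annihilates every $r_k$ but not $\mathds{1}$. Since $\ell^p_\alpha=X^p_\alpha$ is a Banach space, $\mathds{1}$ lies in the closure of $\Span\{r_k:k\ge2\}$ only if every continuous functional vanishing on all the $r_k$ also vanishes on $\mathds{1}$; producing one that does not is exactly what blocks the approximation, contradicting $S_N\to0$.

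The functional I would use is, for $\Re(s)>0$,
\[
\Lambda_s(u)=\sum_{n=1}^\infty u(n)\bigl(n^{-s}-(n+1)^{-s}\bigr),\qquad u=\{u(n)\}_{n\ge1}.
\]
The difference kernel is chosen for two reasons. First, $n^{-s}-(n+1)^{-s}=s\,n^{-s-1}+O_s(n^{-s-2})$, so the dual sequence decays like $n^{-\Re(s)-1}$; via the identification $(\ell^p_\alpha)^*=\ell^{p'}_{-\alpha p'/p}$ and a weighted H\"older estimate, $\Lambda_s$ is bounded on $X^p_\alpha$ precisely when $\sum_n n^{-(\Re(s)+1)p'}\,n^{-\alpha p'/p}<\infty$, which for $1<p\le2$ is equivalent to $\Re(s)>-\tfrac{\alpha+1}{p}$, i.e. to $s\in\Omega_{p,\alpha}$. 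For $p=1$ the dual is a weighted $\ell^\infty$ and the summability is replaced by boundedness of the sequence $n^{-(\Re(s)+1)-\alpha}$, which holds on the \emph{closed} half-plane $\Re(s)\ge-(\alpha+1)$; this accounts for the closed region in the second assertion. Second, the kernel telescopes, so $\Lambda_s(\mathds{1})=\sum_n\bigl(n^{-s}-(n+1)^{-s}\bigr)=1\neq0$ for every $\Re(s)>0$.

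The key computation is $\Lambda_s(r_k)$. Applying Abel summation and the elementary identity that $r_k(n)-r_k(n-1)$ equals $1$ when $k\nmid n$ and $1-k$ when $k\mid n$ (with $r_k(0)=0$), the boundary term $r_k(N)(N+1)^{-s}$ vanishes as $N\to\infty$ for $\Re(s)>0$, leaving the Dirichlet series with coefficients $a_k(n)=1-k\,\mathds{1}_{\{k\mid n\}}$. These coefficients have mean zero over each block of length $k$, so the series converges for $\Re(s)>0$ and, by analytic continuation from $\Re(s)>1$ where it equals $\sum_n n^{-s}-k\sum_m(km)^{-s}$, one obtains
\[
\Lambda_s(r_k)=\zeta(s)-k\,k^{-s}\zeta(s)=\bigl(1-k^{1-s}\bigr)\zeta(s).
\]
It is this mean-zero structure that lets the identity reach into the critical strip. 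Consequently, if $\zeta(s_0)=0$ with $s_0\in\Omega_{p,\alpha}$ (or on its boundary when $p=1$), then $\Lambda_{s_0}(r_k)=0$ for all $k\ge2$, whence $\Lambda_{s_0}(S_N)=\Lambda_{s_0}(\mathds{1})=1$ for every $N$, contradicting $\Lambda_{s_0}(S_N)\to\Lambda_{s_0}(0)=0$.

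I expect the main obstacle to be the \emph{sharpness} of the continuity statement: verifying that $\Lambda_s$ is bounded on $X^p_\alpha$ for every $s\in\Omega_{p,\alpha}$, and in the limiting case $p=1$ up to and including the line $\Re(s)=-(\alpha+1)$. This rests on the precise asymptotics of $n^{-s}-(n+1)^{-s}$ and a careful matching of exponents in the weighted duality estimate, and it is exactly this calculation that pins the region down to $\Omega_{p,\alpha}$ rather than a strictly smaller half-plane. A secondary point needing care is the justification, for $0<\Re(s)\le1$, of the conditionally convergent Dirichlet series and its identification with $(1-k^{1-s})\zeta(s)$, which relies on the bounded partial sums of the mean-zero coefficients together with uniqueness of analytic continuation. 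Finally, one should record the routine verifications that $\mathds{1}$ and each $r_k$ indeed belong to $X^p_\alpha$ for the stated range $\alpha\in(-1-p,-1-\tfrac{p}{2}]$ (both reduce to $\alpha<-1$), so that the functional identities are applied to genuine elements of the space.
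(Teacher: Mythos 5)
Your proof is correct and is essentially the paper's own argument: your functional $\Lambda_s$ is exactly $s$ times the paper's integral $\int_0^1(1+f(x))x^{s-1}\,dx$ written coefficient-wise with the kernel $n^{-s}-(n+1)^{-s}$, and your weighted H\"older/duality estimate that pins down $\Omega_{p,\alpha}$ (with the closed half-plane in the limiting case $p=1$) is the same computation as the paper's passage from \eqref{eqn1200} to \eqref{eqn1200-1}. The only cosmetic difference is that you evaluate $\Lambda_s(r_k)=(1-k^{1-s})\zeta(s)$ directly by Abel summation plus analytic continuation, whereas the paper obtains the corresponding identity by citing Beurling's integral formula (Lemma \ref{lem1}).
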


\medskip

\begin{rem}
In particular, the previous theorem addresses a way to prove the Prime Number Theorem: we will see it is enough to show the convergence condition for the case $p=1$, $\alpha=-2$. On the other hand, the case $p=1$, $\alpha = -3/2$ cannot happen: if there was such a convergence condition there would be no non-trivial zeros of the zeta function at all, which we know to be false.
\end{rem}

\medskip

Before proving Theorem \eqref{thmA}, we observe the following:
\medskip

Let $m \in \N$ be fixed. From the work in \cite{BaDu1}, a natural candidate for approximating $\mathds{1}$ in $\Span \{r_k: \, k\geq 2\}$ in $X^2_{-2}$ norm appears to be
$$-\sum_{k=2}^m r_k \frac{\mu(k)}{k}.$$
By modifying this approach slightly, we study the following attempt at approximating $\mathds{1}$:
\begin{equation}\label{eqn1100}
F_m := \mathds{1} +  \left( \sum_{k=2}^m r_k \frac{\mu(k)}{k} - \sum_{k=1}^m r_m \frac{\mu(k)}{k} \right).
\end{equation}
A similar result based on the same approximation was already done in \cite{BaDu1} but here we get two advantages: first, we arrive to \emph{closed} half-planes in some parameter cases; secondly, we do obtain a two parameter family of spaces providing tools for the lack of zeros of $\zeta$. This may allow future research on the necessary estimates to relax or strengthen, providing additional information. 

\smallskip

Accordingly, in order to apply this result, we will make appropriate choices of $m$'s belonging to an infinite subset $M \subset \N$ of values to obtain accurate estimates in the required approximation. Indeed, as we will show, in order to provide a new proof of the Prime Number Theorem, we may take $M=\N$. Nevertheless, for more advanced estimates,
it will be necessary to determine a suitable choice of the subset $M$.

\medskip

In order to prove Theorem \ref{thmA}, we introduce a set of real functions defined on $(0,1)$ by
\begin{equation}\label{set C}
\mathcal{C}=\left\{ f(x) = \sum_{k=1}^N c_k \left\{\frac{1}{kx}\right\}:\, x \in (0,1), \;   \sum_{k=1}^N \frac{c_k}{k} =0 \right\}.
\end{equation}
Note that $\mathcal{C}\subset L^p(0,1)$ for $1\leq p\leq \infty$. Moreover, each function in $\mathcal{C}$ is constant on intervals of the form $\left(\frac{1}{n+1},\frac{1}{n}\right]$ for $n \geq 1$: the value of $c_1$ is determined by the remaining values so that a basis of $\mathcal{C}$ is given by the functions of the form $g_k(x)= k \left\{ \frac{1}{kx}\right\} - \left\{ \frac{1}{x}\right\}$,  for $k \geq 2$. On the interval $\left(\frac{1}{n+1},\frac{1}{n}\right]$, these functions have constant values $g_k(x) = r_k(n)$.

We will also need the following lemma auxiliary to the main result of Beurling in \cite{Beurling1955} (see formula (2) there):

\begin{lem}\label{lem1}
If $f \in \mathcal{C}$ with $f(x) =\sum_{k=1}^N c_k \left\{\frac{1}{kx}\right\}$, $x\in (0,1)$ and $\Re (s) >0$ then
\[\int_0^1 f(x) x^{s-1} dx= -\frac{\zeta (s) \sum_{k=2}^N c_k k^{-s}}{s}.\]
\end{lem}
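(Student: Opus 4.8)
The plan is to reduce everything to a single-index computation and then invoke linearity together with the constraint $\sum_{k} c_k/k = 0$ that defines $\mathcal{C}$. The cornerstone is the Mellin-type identity, valid for $0 < \theta \le 1$ and $\Re(s) > 0$,
\[\int_0^1 \left\{\frac{\theta}{x}\right\} x^{s-1}\,dx = \frac{\theta}{s-1} - \frac{\zeta(s)\,\theta^{s}}{s},\]
which is essentially the content of Beurling's formula~(2). I would prove it by the substitution $u = \theta/x$, which rewrites the left-hand side as $\theta^{s}\int_{\theta}^{\infty}\{u\}\,u^{-s-1}\,du$; the factor $x^{s-1}$ is integrable at $x=0$ and $\{\theta/x\}$ is bounded, so the integral converges precisely on $\Re(s) > 0$. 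Splitting $\int_{\theta}^{\infty} = \int_{\theta}^{1} + \int_{1}^{\infty}$ and using $\{u\}=u$ on $[\theta,1)$ dispatches the first piece by an elementary power integral, while the tail $\int_{1}^{\infty}\{u\}u^{-s-1}\,du = \frac{1}{s-1} - \frac{\zeta(s)}{s}$ is the standard integral representation furnishing the analytic continuation of $\zeta$ to $\Re(s) > 0$.

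With the single-term identity in hand, I would set $\theta = 1/k$, obtaining
\[\int_0^1 \left\{\frac{1}{kx}\right\}x^{s-1}\,dx = \frac{1}{k(s-1)} - \frac{\zeta(s)\,k^{-s}}{s},\]
and then sum against the coefficients $c_k$. Since elements of $\mathcal{C}$ are \emph{finite} sums, interchanging summation and integration is immediate and needs no dominated-convergence argument. Collecting the pole terms produces $\frac{1}{s-1}\sum_{k} c_k/k$, which vanishes \emph{identically} by the defining constraint of $\mathcal{C}$ — this is exactly the purpose of that constraint, and it simultaneously removes the apparent pole at $s=1$. What survives is $-\frac{\zeta(s)}{s}\sum_{k} c_k\,k^{-s}$, the asserted form.

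The analytic heart — the $\zeta$ integral representation on $\Re(s)>0$ and convergence of the Mellin integral there — is routine, so I do not expect it to be the true obstacle. The point that genuinely demands care is the range of the surviving sum: the index $k=1$ contributes $c_1\{1/x\}$, whose transform is $\frac{1}{s-1} - \frac{\zeta(s)}{s}$, so after the pole cancellation the $k=1$ term leaves behind $-\frac{\zeta(s)}{s}\,c_1$, which is \emph{not} zero in general (indeed $c_1 = -\sum_{k\ge 2} c_k/k$). Carried out carefully, the derivation therefore yields $-\frac{\zeta(s)}{s}\sum_{k=1}^{N} c_k k^{-s}$ with the sum starting at $k=1$; one checks on the simplest case $f = g_2 = 2\{1/(2x)\} - \{1/x\}$, whose interval values are $r_2(n)$, that $\int_0^1 f\,x^{s-1}\,dx = (1-2^{1-s})\zeta(s)/s$, matching the $k=1$ version but not the $k\ge 2$ truncation. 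Accordingly I would record the identity with the sum running from $k=1$ (equivalently $-\frac{\zeta(s)}{s}\sum_{k\ge2}c_k(k^{-s}-k^{-1})$ after eliminating $c_1$ via the constraint), and I would flag the displayed range as the one bookkeeping subtlety to reconcile.
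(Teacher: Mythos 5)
Your derivation is correct and is precisely the argument the paper implicitly relies on: Lemma \ref{lem1} is stated without proof as an instance of Beurling's formula (2), and your Mellin computation (the substitution $u=\theta/x$, the tail identity $\int_1^\infty \{u\}\,u^{-s-1}\,du=\frac{1}{s-1}-\frac{\zeta(s)}{s}$ valid for $\Re(s)>0$, and the cancellation of the $\frac{1}{s-1}$ pole via the defining constraint $\sum_k c_k/k=0$) is exactly how that formula is obtained. The discrepancy you flag is also genuine: the computation yields $-\frac{\zeta(s)}{s}\sum_{k=1}^{N}c_k k^{-s}$, and since $c_1=-\sum_{k\geq 2}c_k/k$ need not vanish, the printed range $k\geq 2$ is a slip in the statement; your test on $g_2$, whose transform is $(1-2^{1-s})\zeta(s)/s$ rather than $-2^{1-s}\zeta(s)/s$, settles the point, and no renormalization of the $c_k$ (e.g.\ reading them as coefficients of the $g_k$ basis) rescues the displayed formula. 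The slip is harmless for the way the lemma is used, since the argument following \eqref{lem1 rev} only needs the right-hand side to have the form $\frac{1}{s}\bigl(1-\zeta(s)\cdot(\cdots)\bigr)$, which still forces the contradiction $0=1/s$ at a putative zero.
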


\smallskip

From Lemma \ref{lem1}, adding $1/s$ on both sides makes the following expression valid on any $s$ with $\Re (s) >0$:
\begin{equation}\label{lem1 rev}
\int_0^1 (1+f(x)) x^{s-1} dx= \frac{1}{s}\left(1- \zeta (s)  \sum_{k=2}^N c_k k^{-s}\right) .
\end{equation}

\smallskip

Suppose, for the moment, we can find functions in $\mathcal{C}$, namely $\{f_N\}_{N=1}^{\infty}$, making the absolute value of the integral in the left-hand side arbitrarily small, and that $s$ is a zero of $\zeta$ with $\Re (s) >0$. Then it must happen that $0=\frac{1}{s},$ which is a contradiction.

\medskip

This is to be read as a possibility for showing that a fixed value $s$ is not a zero of $\zeta$: we only need to show that the integral on the left hand side of \eqref{lem1 rev} can be made arbitrarily small (by choosing $f \in \mathcal{C}$). Therefore our aim in order to prove Theorem \ref{thmA}  is to find a sequence converging to the function $1$ by means of functions $\{f_N\}_{N=2}^{\infty}$ in terms of the integral.

\medskip

\begin{demode}\emph{Theorem \ref{thmA}.}
Note that approximating the function $1$ with linear combinations of functions of the form $\left\{\frac{1}{kx}\right\}$ is equivalent to approximating the sequence of its values over intervals of the form $\left(\frac{1}{n+1},\frac{1}{n}\right)$, by the locally constant values of functions in $\mathcal{C}$. Hence, if $f \in \mathcal{C}$ with $f(x) =\sum_{k=1}^N c_k \left\{\frac{1}{kx}\right\}$, $x\in (0,1)$, we deduce that for complex numbers $s$ with $\Re (s)>0$,

\begin{eqnarray}\label{eqn1000}
\int_0^1 (1+f(x)) x^{s-1} dx &=& \sum_{n=1}^{\infty} \int_{\frac{1}{n+1}}^{\frac{1}{n}} (1+f(x)) x^{s-1} dx \nonumber \\
&=& \displaystyle \sum_{n=1}^{\infty} \left(1+f\left(\frac{1}{n}\right)\right) \int_{\frac{1}{n+1}}^{\frac{1}{n}}  x^{s-1} dx  \nonumber\\
&=& \displaystyle \sum_{n=1}^{\infty} \left( 1+\sum_{k=2}^N c_k r_k(n) \right) \int_{\frac{1}{n+1}}^{\frac{1}{n}}  x^{s-1} dx.
\end{eqnarray}
In the last step here, we used that $r_k(n) = k \left\{\frac{n}{k}\right\} = k \left\{\frac{1}{kx}\right\}- \left\{\frac{1}{x}\right\}$ for all $x \in (\frac{1}{n+1}, \frac{1}{n}]$, and that $f \in \mathcal{C}$ and thus $c_1= -\sum_{k=2}^N \frac{c_k}{k}$.

\medskip

Taking absolute values and writing the last expression in terms of the sequence $S_N$ whose existence is guaranteed by hypothesis, \eqref{eqn1000} turns into
\begin{equation}\label{eqn1200}
\left|\sum_{n=1}^{\infty} S_N(n) \int_{\frac{1}{n+1}}^{\frac{1}{n}}  x^{s-1} dx\right| \leq \frac{1}{\Re s} \sum_{n=1}^{\infty} \left|S_N(n)\right|  \left|\frac{1}{n^{\Re(s)}}-\frac{1}{(n+1)^{\Re(s)}}\right| .
\end{equation}

\medskip

Assume first that $p=1$, $\alpha \in [-2, -3/2]$, and $\Re(s) \geq -(\alpha+1)$. Then the right hand side \eqref{eqn1200} is bounded by  $2 \sum_{n=1}^{\infty} \left|S_N(n)\right|(n+1)^{\alpha}$ which converges to $0$ by assumption.

\medskip

On the other hand, if $1<p \leq 2$ and $\alpha \in (-1-p,-1-\frac{p}{2}]$, bearing in mind that $n^{-\Re(s)} - (n+1)^{-\Re(s)} \approx n^{-1-\Re(s)}$, turns \eqref{eqn1200} into
\begin{eqnarray}\label{eqn1200-1}
\left| \sum_{n=1}^{\infty} S_N(n)  \int_{\frac{1}{n+1}}^{\frac{1}{n}}  x^{s-1} dx\right| & \leq & \sum_{n=1}^{\infty}   \left|S_N(n)\right| n^{-1-\Re(s)} \nonumber \\
& \leq &   \left(\sum_{n=1}^{\infty} \left|S_N(n) \right|^p n^{\alpha}\right)^{1/p}  \left(\sum_{n=1}^{\infty} n^{-q(1+\Re(s)+\frac{\alpha}{p})}\right)^{1/q}
\end{eqnarray}
where last inequality follows upon applying H\"older inequality. Here $q$ is the reciprocal of $p$, namely $\frac{1}{p}+\frac{1}{q}=1$.

The first factor on the right hand side of \eqref{eqn1200-1} converges to 0 by hypothesis while the second factor is a convergent sum because the exponent of $n$ in the sum is smaller than $-1$, which is true precisely because $\Re(s) > -\frac{\alpha+1}{p}$. This concludes the proof of Theorem \ref{thmA}.
\end{demode}

\begin{rem}
Applying H\"older inequality to the absolute value of the left-hand side is at the core of Beurling's criterion for establishing regions free of $\zeta$ zeros.
In obtaining the B\'aez-Duarte criterion, this last step is done \emph{after} applying H\"older inequality to the integral in the spirit of Beurling, but here we decidedly want to perform this step prior to the introduction of estimates.
\end{rem}

\subsection{An application of Theorem \ref{thmA} connected to the Prime Number Theorem}\label{subsection 2}
Our aim in this subsection is to establish a first connection to the famous Prime Number Theorem: \emph{If $\pi(x)$ is the number of primes less than or equal to $x$, then
$x^{-1} \pi (x) \log x\to 1$  as $x\to \infty$}. That is, $\pi(x)$ is asymptotically equal to $x/\log x$ as $x\to \infty$.
It is well-known that the Prime Number Theorem can be derived from the following statement:

\medskip

\begin{PNT}
Let $s\in \C\setminus \{1\}$ with $\Re(s) = 1$. Then $\zeta(s) \neq 0$.
\end{PNT}

\medskip

With the definition of $F_m$ given in \eqref{eqn1100} and Theorem \ref{thmA} at hand,  the Prime Number Theorem will be connected with the case $p=1$ of the following result. Here the proof works for all positive values of $p$ even though for $p<1$ the spaces are not Banach and some of the previous results may then fail.

\begin{prop}\label{lemC}
Let $0 < p < \infty$. If $\alpha = -p-1$, then there exists a constant $C$ such that for all $m \in \N$,
\[\|F_m\|_{p,\alpha} \leq C.\]
If $\alpha <-p-1$, then \[\|F_m\|_{p,\alpha} \rightarrow 0 \quad \text{ as } \quad m \rightarrow \infty.\]

\end{prop}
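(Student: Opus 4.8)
The plan is to reduce the norm to a single explicit tail sum by exploiting the fact that the correction term in \eqref{eqn1100} is engineered precisely to make $F_m$ vanish on an initial segment. Writing $M(m):=\sum_{k=1}^m \mu(k)/k$ and using that $r_1\equiv 0$ on $\N$, the definition \eqref{eqn1100} reads, as sequences,
$F_m=\mathds{1}+\sum_{k=1}^m \tfrac{\mu(k)}{k}\,r_k - M(m)\,r_m$,
so $F_m$ is indeed of the form $\mathds{1}+\sum_{k=2}^m c_{k,m} r_k$. By \eqref{definicion lp} the quantity to estimate is $\|F_m\|_{p,\alpha}^p=\sum_{n\ge 1}|F_m(n)|^p\, n^{\alpha}$.

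The key step I would carry out first is the vanishing of $F_m(n)$ for small $n$. Inserting $r_k(n)=n-k\lfloor n/k\rfloor$ into the sum and invoking the elementary divisor identity $\sum_{k=1}^m \mu(k)\lfloor n/k\rfloor=1$ — valid whenever $m\ge n$, since $\lfloor n/k\rfloor=0$ for $k>n$ and $\sum_{d\le n}\sum_{k\mid d}\mu(k)=1$ — gives $\sum_{k=1}^m \tfrac{\mu(k)}{k}r_k(n)=n\,M(m)-1$ for every $n\le m$. Since $r_m(n)=n$ when $n<m$, the constant $1$, the term $n\,M(m)$, and the correction $M(m)\,r_m(n)$ cancel exactly, so that $F_m(n)=0$ for all $1\le n<m$. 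Consequently the norm only feels the tail $n\ge m$.

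Next I would bound $|F_m(n)|$ uniformly in $n$. From $0\le r_k(n)\le k-1$ one has $\big|\sum_{k=2}^m \tfrac{\mu(k)}{k}r_k(n)\big|\le m-1$, while evaluating the same divisor identity at $n=m$ yields $m\,M(m)=1+\sum_{k=2}^m \mu(k)\{m/k\}$, whence $|M(m)|\le 1$ and therefore $|M(m)\,r_m(n)|\le m-1$. Together these give $|F_m(n)|\le 2m$ for every $n$. Combining with the vanishing, $\|F_m\|_{p,\alpha}^p\le (2m)^p\sum_{n\ge m} n^{\alpha}$, and for $\alpha<-1$ integral comparison gives $\sum_{n\ge m}n^{\alpha}\le C_\alpha\, m^{\alpha+1}$. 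Thus $\|F_m\|_{p,\alpha}^p\le 2^p C_\alpha\, m^{\,p+\alpha+1}$: the exponent $p+\alpha+1$ is exactly $0$ when $\alpha=-p-1$, producing the uniform bound, and is strictly negative when $\alpha<-p-1$, producing convergence to $0$.

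The hard part — indeed the only point requiring genuine care — is that the uniform estimate $|M(m)|\le 1$ must be obtained by elementary means, straight from the divisor identity as above, rather than from the known limit $\sum_k \mu(k)/k\to 0$. The latter is equivalent to the Prime Number Theorem that this entire mechanism is designed to reprove, so using it would render the argument circular; everything else past the cancellation in the second step is routine bookkeeping with the $n^{\alpha}$ weight.
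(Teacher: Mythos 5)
Your proof is correct and follows essentially the same route as the paper: the cancellation giving $F_m(n)=0$ for $n<m$ via the identity $\sum_{k\le m}\mu(k)\lfloor n/k\rfloor=1$, the uniform bound $|F_m(n)|\le 2m$ using $\bigl|\sum_{k\le m}\mu(k)/k\bigr|\le 1$, and the tail estimate $\sum_{n\ge m}n^{\alpha}\lesssim m^{\alpha+1}$. The only (welcome) addition is that you spell out the elementary, non-circular derivation of $\bigl|\sum_{k\le m}\mu(k)/k\bigr|\le 1$ from the divisor identity at $n=m$, where the paper simply cites the standard reference.
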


\medskip

Suppose, for the moment,  that Proposition \ref{lemC} is proved. What is the connection with the Prime Number Theorem?

Let $p=2$ and $\alpha=-3$ in Proposition \ref{lemC}. Then $\{F_m\}_{m\geq 1}$ is a bounded sequence in $X^2_{-3}$ norm. In addition, for $\alpha < -3$ the sequence of functions $\{F_{m_k}\}_{k \geq 1}$, where  $m_k = 2^{2^k}$, converges to zero in $V:=X^2_\alpha$-norm. Accordingly, $\{F_{m_k}\}_{k \geq 1}$ converges pointwise to zero. Now, $X^2_{-3}$
is a functional Banach space such that the span of the point evaluation linear functionals are dense in its dual space. Hence,
$\{F_{m_k}\}_{k \geq 1}$ is a weakly convergent sequence (to zero) in $X^2_{-3}$ (see \cite[Proposition 1.2]{CowenMcCluer}). Now, Mazur's Lemma (see \cite[Theorem 1, Chapter II]{Diestel}, for instance) yields the existence of a sequence of convex combinations $\{S_N\}_{N \geq 1}$ that is actually convergent to zero in the space norm.

Unfortunately, this argument does not work with the extreme case $p=1$, but we can at least derive the following statement directly from Theorem \ref{thmA}:
\begin{corollary}
Suppose that $\{F_m\}_{m\in\N}$ satisfy $\|F_m\|_{1,-2}$ tends to $0$ as $m$ tends to $\infty$. Then, the Prime Number Theorem follows.
\end{corollary}
We will later see that from a well known equivalent formulation of the Prime Number Theorem, the assumption in this hypothesis can be shown.

\medskip

At this point, it is worthy to note that those $\alpha$'s satisfying the last statement of Proposition \ref{lemC} must have a maximum threshold. From applying Theorem \ref{thmA} to the sequence $F_m$, larger zero-free regions of the Riemann zeta function would be deduced. Indeed, if such an estimate was available for $\|F_m\|_{1,-3/2}$ or $\|F_m\|_{p, c}$ for $p \in (1,2]$ and $c > -1-\frac{p}{2}$, it would mean the Riemann zeta function would have no non-trivial zeros at all, but the existence of some such zeros were even known to Riemann.

\medskip

We are left with the task of proving Proposition \ref{lemC} and for this end, we require to show some properties of the approximation errors $F_m$.

\subsubsection{Basic properties of $F_m$}

Let $\mu(k)$ be the M\"obius function evaluated at $k \in \N$. It is standard that \[\sum_{k=1}^n \mu(k) \left\lfloor \frac{n}{k} \right\rfloor = 1,\]
where $\lfloor x \rfloor$ denotes the integer part of $x \in \R$.

Likewise, the property that, as $m$ grows to $\infty$, \[\left|\sum_{k=1}^m \frac{\mu(k)}{k} \right| = o(1),\]  can be proved to be equivalent to the Prime Number Theorem. This sum is in any case always bounded by 1. See Lemma 3.2, Remark 3.3, and Theorems 5.9 and 5.11 in \cite{BateDia}.
If $n,m \in \N$, we define the 2-variable functions
\[G(n,m)=\sum_{k=1}^m \mu(k) \left\lfloor \frac{n}{k} \right\rfloor,\]
and
\[H(n,m)= \sum_{k=2}^m \mu(k) \left\{ \frac{n}{k} \right\}.\]

Having in mind the expression \eqref{eqn1100} for $F_m$, we notice the term on $k=m$ is $0$. In addition, $F_m$ can be easily evaluated at any $n \in \N$:
\begin{equation}\label{eqn11}
F_m (n) = 1 + H(n,m) - r_m(n) \sum_{k=1}^m  \frac{\mu(k)}{k},
\end{equation}

\noindent which can be simplified by decomposing $r_t(q) = t\{q/t\} = q - t \lfloor q/t \rfloor$, giving

\begin{align*}
F_m (n) &= 1 + n \sum_{k=2}^m \frac{\mu(k)}{k} - \sum_{k=2}^m  \mu(k) \left \lfloor \frac{n}{k} \right \rfloor- n \sum_{k=1}^m \frac{\mu(k)}{k} + m \left \lfloor \frac{n}{m}\right \rfloor  \sum_{k=1}^m   \frac{\mu(k)}{k} \\
&= 1 - \sum_{k=2}^m  \mu(k) \left \lfloor \frac{n}{k} \right \rfloor- n + m \left \lfloor \frac{n}{m}\right \rfloor  \sum_{k=1}^m   \frac{\mu(k)}{k}. \\
\end{align*}
Hence,
\begin{equation}\label{eqn13}
F_m(n) = 1 - G(n,m) + m \left \lfloor \frac{n}{m} \right \rfloor  \sum_{k=1}^m   \frac{\mu(k)}{k}.
\end{equation}

\noindent In particular, when $n < m$, then $\left \lfloor \frac{n}{m} \right \rfloor =0 $ and therefore
$$\sum_{k=1}^m \mu(k) \left \lfloor \frac{n}{k} \right \rfloor = \sum_{k=1}^n \mu(k) \left \lfloor \frac{n}{k} \right \rfloor =1.$$
This means that
\begin{equation}\label{eqn10}
F_m(n) = 0, \quad n \leq m.
\end{equation}

\noindent Likewise, from \eqref{eqn11} it follows that
\begin{equation}\label{eqn12}
|F_m(n)| \leq 1 + \sum_{k=2}^m 1 + r_m(n) < 2m, \quad n,m \in \N.
\end{equation}
\medskip

\noindent With \eqref{eqn10} and \eqref{eqn12} at hand, the proof of Proposition \ref{lemC} goes as follows.

\medskip

\medskip

\begin{demode}{\emph{Proposition \ref{lemC}}. } Suppose $\alpha \leq -p-1$, where $p>0$. Upon applying \eqref{eqn10} and  \eqref{eqn12} we deduce
\[
\|F_m\|_{p,\alpha}^p = \sum_{n=m}^\infty |F_m(n)|^p (n+1)^{\alpha} \lesssim \frac{m^p}{|\alpha +1 |} (m+1)^{\alpha+1},
\]
which converges to $0$ if $\alpha <-p-1$ and remains bounded if $\alpha=-p-1$ as claimed.
This completes the proof. \end{demode}


\section{M\"obius estimates and zero-free regions of the Riemann zeta function}\label{section 3}

As mentioned in the introduction, the distribution of the zeros of the Riemann zeta function is strongly connected to the behavior of the M\"obius function $\mu$. The main goal of this section is to establish a sufficient condition on the behavior of sums related to the M\"obius function which guarantees that the Riemann zeta function has some zero-free regions within the critical strip, namely, the strip consisting of those complex numbers with imaginary part in $(0,1)$. More precisely, we prove the following:

\begin{theorem}\label{thm-3.1}
Let $\s \in [\frac{1}{2},1)$ and $\omega: [0, +\infty) \rightarrow [0, +\infty]$ be a non-decreasing function satisfying $\omega\left(\frac{1}{2}\right) < \infty$.
Suppose that $m \in \N$ and for $n \in (m, m^{1/\s})$, we have
\begin{equation}\label{eqn6}|G(n,m)| \leq \left(m^{\s} + \frac{n^{\s}}{\sqrt{\log(n)}} + \left(\frac{n}{m}\right)^{\frac{\s}{1-\s}} \right) \cdot \omega\left(m \left|\sum_{k=1}^m \frac{\mu(k)}{k}\right|\right).
\end{equation}
Then the Riemann zeta function has no zeros in the region
\[\Omega_{\s} = \{s \in \C: \Re(s) > \s\}.\]
\end{theorem}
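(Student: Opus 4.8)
The plan is to derive Theorem \ref{thm-3.1} from Theorem \ref{thmA} in the Hilbertian case $p=2$, using the approximants $F_m$ of \eqref{eqn1100} as the linear combinations whose existence Theorem \ref{thmA} requires. Since we want the region $\Omega_s=\{\Re>s\}$ and Theorem \ref{thmA} produces $\{\Re>-\frac{\alpha+1}{p}\}$, I would set $p=2$ and $\alpha=-1-2s$; as $s\in[\tfrac12,1)$ this gives $\alpha\in(-3,-2]$, which is exactly the admissible range $(-1-p,-1-\tfrac{p}{2}]$ for $p=2$. It then suffices to produce a sequence of linear combinations $\1+\sum_k c_k r_k$ converging to $0$ in $X^2_\alpha=D_\alpha$.

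The first real step is to select the indices at which the hypothesis \eqref{eqn6} is usable. Writing $A_m:=\sum_{k=1}^m \mu(k)/k$, the weight $\omega\big(m|A_m|\big)$ in \eqref{eqn6} is only under control where $m|A_m|$ is small, since all we know of $\omega$ is monotonicity together with $\omega(\tfrac12)<\infty$. Here Selberg's theorem \cite{Selberg} enters: it guarantees that the partial sums $A_m$ change sign for infinitely many $m$. At any sign change, $|A_m|+|A_{m+1}|=|A_{m+1}-A_m|=|\mu(m+1)|/(m+1)\le 1/(m+1)$, so $\min(|A_m|,|A_{m+1}|)\le 1/(2(m+1))$, and hence one of the two consecutive indices $j\in\{m,m+1\}$ satisfies $j|A_j|\le \tfrac12$. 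This yields an infinite set $M\subset\N$ with $m|A_m|\le\tfrac12$, so that $\omega(m|A_m|)\le\omega(\tfrac12)<\infty$ for every $m\in M$.

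The technical core is then the uniform bound $\|F_m\|_{2,\alpha}\lesssim_{s,\omega}1$ for $m\in M$. Using the formula \eqref{eqn13}, $F_m(n)=1-G(n,m)+m\lfloor n/m\rfloor A_m$, together with $F_m(n)=0$ for $n\le m$ from \eqref{eqn10}, I would split $\|F_m\|_{2,\alpha}^2=\sum_{n>m}|F_m(n)|^2(n+1)^\alpha$ at the threshold $n=m^{1/s}$. For $n\ge m^{1/s}$ the trivial estimate $|F_m(n)|<2m$ of \eqref{eqn12} gives a tail $\lesssim m^2\,(m^{1/s})^{\alpha+1}=m^2m^{(\alpha+1)/s}=1$, since $(\alpha+1)/s=-2$. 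For $m<n<m^{1/s}$, the hypothesis \eqref{eqn6} (with $\omega(m|A_m|)\le\omega(\tfrac12)$) and $m\lfloor n/m\rfloor|A_m|\le(n/m)\,m|A_m|\le\tfrac12(n/m)\le\tfrac12(n/m)^{s/(1-s)}$ yield
\[
|F_m(n)|\lesssim_{s,\omega} m^s+\frac{n^s}{\sqrt{\log n}}+\Big(\frac{n}{m}\Big)^{\frac{s}{1-s}}.
\]
Squaring, weighting by $(n+1)^\alpha$ and summing, the three contributions are each $O(1)$: the first since $m^{2s}\sum_{n>m}(n+1)^\alpha\approx m^{2s}m^{-2s}$; the second since $2s+\alpha=-1$ collapses it to $\sum_{m<n<m^{1/s}}\frac{1}{n\log n}\approx\log(1/s)$; and the third since the exponent arithmetic gives $m^{-2s/(1-s)}\sum_{m<n<m^{1/s}} n^{\frac{2s}{1-s}+\alpha}\approx m^{-2s/(1-s)}m^{2s/(1-s)}$. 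I expect the delicate point to be precisely this middle sum: the factor $1/\sqrt{\log n}$ in \eqref{eqn6} is calibrated so that, after $2s+\alpha=-1$ turns the summand into $1/(n\log n)$, the sum over the window $(m,m^{1/s})$ stays bounded instead of diverging logarithmically, and making this and the endpoint analysis of the third sum uniform in $m$ is the heart of the matter, while the extraction of $M$ via Selberg's theorem is the key conceptual input.

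Finally, since $F_m(n)=0$ once $m\ge n$, the family $\{F_m\}_{m\in M}$ tends to $0$ coordinatewise, and being norm-bounded in the Hilbert space $D_\alpha$ it converges weakly to $0$. Mazur's lemma then produces convex combinations $S_N=\1+\sum_k c_{k,N}r_k$ (the form is preserved because the $\1$-coefficients of a convex combination sum to $1$) converging to $0$ in $D_\alpha$-norm, exactly as in the proof of the Prime Number Theorem. Applying Theorem \ref{thmA} with $p=2$ and $\alpha=-1-2s$ gives that $\Omega_{2,\alpha}=\{\Re>s\}=\Omega_s$ is free of zeros of $\zeta$, as claimed.
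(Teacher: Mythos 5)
Your proposal is correct and follows essentially the same route as the paper's proof: reduction to Theorem \ref{thmA} with $p=2$, $\alpha=-1-2s$, extraction of the subsequence $M$ via Selberg's sign-change theorem, the split of $\|F_m\|_{2,\alpha}^2$ at $n=m^{1/s}$ with the trivial bound $|F_m(n)|<2m$ on the tail, the same three-term exponent arithmetic on the main range (including the $1/(n\log n)$ collapse of the middle term), and the final passage through weak convergence and Mazur's lemma. The only differences are cosmetic refinements on your side (the explicit $\min(|A_m|,|A_{m+1}|)\le \frac{1}{2(m+1)}$ bound and the remark that convex combinations preserve the $\1$-coefficient).
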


\smallskip

Before proceeding with the proof, a few remarks are in order.
\smallskip

\begin{rem}$\;$ \newline
\begin{enumerate}
\item Instead of assuming \eqref{eqn6}, the same result follows if $G$ is replaced by $H$: Since $\frac{\s}{1-\s} \geq 1$, and $\left\lfloor \frac{n}{k} \right\rfloor = \frac{n}{k} - \left\{ \frac{n}{k} \right\}$ the change of $G$ by $H$ only affects the choice of function $\omega(t)$ by adding (at most) a term $t+1$ to it.

\item For $H$, the hypotheses are trivial in the limit $\s=1 \equiv \omega$ since $|H(n,m)| \leq m$, even changing the term $\left(\frac{n}{m}\right)^{\frac{\s}{1-\s}}$ by the smaller $\frac{n}{m}$. In that case, for $G$, \eqref{eqn6} also holds in the limit $\s=1$ with $\omega(t)=1+t$.

\item When $n \not  \in (m, m^{1/\s})$, \eqref{eqn6} does hold automatically with $\omega(t)=2+t$. Indeed:
\begin{enumerate}
\item[(c1)] If $n \geq m^{1/\s}$, then \[|H(n,m)| \leq m = m^{(\frac{1}{\s}-1)(\frac{\s}{1-\s})}= \left(\frac{m^{1/\s}}{m}\right)^{\frac{\s}{1-\s}} \leq \left(\frac{n}{m}\right)^{\frac{\s}{1-\s}}.\]
\item[(c2)] If $n \leq m$, then $n/k < 1$ for $n < k \leq m$ and thus \[|G(n,m)| = |G(n,n)|=1.\]
\end{enumerate}

\item For $\s=1/2$ and $\omega(t)=1+t$, the assumption \eqref{eqn6} does hold at least for $n \leq 360.000$. At the same time, for $\s= 1/2$, the celebrated work of Odlyzko and te Riele regarding the Mertens function \cite{OtR} points either towards $\omega$ being a function larger than 1 for the result to apply, or towards the term $m^{\s}$ being dominated by the terms involving $n$.

\item When $\left|\sum_{k=1}^m \frac{\mu(k)}{k} \right| \geq \frac{1}{\sqrt{m}}$, there is nothing to be shown. This means our condition goes in a direction that is completely different from other known conditions for Riemann Hypothesis based on $\mu$: The values of $m$ for which $\left|\sum_{k=1}^m \frac{\mu(k)}{k}\right| \geq 1/\sqrt{m}$ are not an obstruction at all. In particular, by S. Selberg's theorem \cite{Selberg}, there are infinitely many values of $m \in \N$ such that
\[m\left|\sum_{k=1}^m \frac{\mu(k)}{k} \right| \leq \frac{1}{2}.\]
The proof of Theorem \ref{thm-3.1} is based on the existence of these values.

\item Depending on the value of $n$,  the leading term of the first factor of the right-hand side of \eqref{eqn6} is,
\begin{align*}
m^{\s}, \quad &\text{ when } \quad n \in \left(m, m (\log m)^{\frac{1}{2{\s}}}\right],\\
\frac{n^{\s}}{\sqrt{\log(n)}}, \quad &\text{ when } \quad n \in \left(m (\log m)^{\frac{1}{2{\s}}}, \frac{m^{1/{\s}}}{(\log m)^{\frac{1-{\s}}{2{\s}^2}}}\right],\\
\left(\frac{n}{m}\right)^{\frac{\s}{1-\s}}, \quad &\text{ when } \quad n \in \left(\frac{m^{1/{\s}}}{(\log m)^{\frac{1-{\s}}{2{\s}^2}}}, m^{1/{\s}}\right].
\end{align*}

\end{enumerate}
\end{rem}
\begin{proof}

Let us apply Theorem \ref{thmA} with $p=2$ and $\alpha = -(1+2{\s})$. In Hilbert spaces, we can apply Mazur's Theorem  which tells us that the weak closure of a linear space is equal to its closure. Thus, we just need to prove that under our hypotheses $F_m$ is weakly convergent in the $X^2_\alpha$ norm (or equivalently, in the $D_\alpha$ norm). Since we know that pointwise convergence holds from Proposition \ref{lemC}, then we can procede as we did when mentioning the Prime Number Theorem: it is enough to check that $\|F_m\|_{p,\alpha}$ stays bounded when $m\in M$ for some (increasing)  subsequence $M=\{m_r\}_{r\in\N} \subset \N$.

In order to evaluate the $X^2_{\alpha}$-norm of $F_m$,  firstly, we apply \eqref{eqn10} to get rid of terms where $n \leq m$. Then, by means of the estimate \eqref{eqn12} applied to the tail of the sum, we obtain:
\begin{align*}
\|F_m\|^2_{2,{\s}} &= \sum_{n = m}^{\lfloor m^{1/{\s}} \rfloor} |F_m(n)|^2 \left(\frac{1}{n^{2{\s}}}-\frac{1}{(n+1)^{2{\s}}}\right) \\  &+ \sum_{n = \lfloor m^{1/{\s}}\rfloor +1}^{\infty}  |F_m(n)|^2 \left(\frac{1}{n^{2{\s}}}-\frac{1}{(n+1)^{2{\s}}}\right) \\
& \leq \sum_{n = m}^{\lfloor m^{1/{\s}} \rfloor} |F_m(n)|^2 \left(\frac{1}{n^{2{\s}}}-\frac{1}{(n+1)^{2{\s}}}\right) + 4 m^2 m^{-2}.
\end{align*}

The only thing remaining is to show that under the hypotheses of Theorem \ref{thm-3.1}, there is a $C>0$ and an (increasing) subsequence $M=\{m_j\}_{j\in\N} \subset \N$ such that for $m \in M$,
\begin{equation}\label{eqn17}
\sum_{n = m}^{\lfloor m^{1/{\s}} \rfloor} |F_m(n)|^2 \left(\frac{1}{n^{2{\s}}}-\frac{1}{(n+1)^{2{\s}}}\right) \leq C.
\end{equation}

By a theorem of S. Selberg \cite{Selberg}, there are infinitely many values of $m \in \N$ for which the quantity $\sum_{k=1}^m \frac{\mu(k)}{k}$ changes sign. If $m$ is such that \[\left( \sum_{k=1}^{m-1} \frac{\mu(k)}{k}\right) \cdot \left( \sum_{k=1}^m \frac{\mu(k)}{k} \right) < 0,\] then we have both
\[\left| \sum_{k=1}^{m-1} \frac{\mu(k)}{k} \right| \leq \frac{1}{m} \quad \text{ and } \quad  \left| \sum_{k=1}^m \frac{\mu(k)}{k} \right| \leq \frac{1}{m}.\]
In fact, we can then choose either $m$ or $m-1$ and there are infinitely many values of $m \in \N$ such that
\begin{equation}\label{eqn20}
m \left| \sum_{k=1}^m \frac{\mu(k)}{k} \right| \leq \frac{1}{2}.
\end{equation}
We restrict to those values of $m$, i.e., $M$ is formed by the values $m \in \N$ with the property \eqref{eqn20} and from now on we assume that $m \in M$. Bearing that in mind, we will denote $t = m \left| \sum_{k=1}^m \frac{\mu(k)}{k} \right|$.

Then \eqref{eqn13} leads to
\[|F_m(n)| \leq 1 + t (n/m) + \left|\sum_{k=1}^{m} \mu(k) \lfloor \frac{n}{k} \rfloor \right|.\]
Notice that $1 \leq \frac{{\s}}{1-{\s}}$ and therefore, if \eqref{eqn6} holds, $F_m$ satisfies
\[|F_m(n)| \leq \left( m^{\s} + \frac{n^{\s}}{\sqrt{\log n}} + \left(\frac{n}{m}\right)^{\frac{\s}{1-\s}}\right) \omega'(t),\]
where $\omega'(\s) := \omega(\s) + 1+ \s$.

Let us finally check that \eqref{eqn17} holds. By our last considerations, $F_m$ can be controlled with $\omega'$ yielding
\begin{align*}
\sum_{n=m}^{{\lfloor m^{1/{\s}} \rfloor}} &|F_m(n)|^2 \left( \frac{1}{n^{2\s}} - \frac{1}{(n+1)^{2\s}}\right) \\
&\lesssim (\omega'(t))^2  \cdot \left( \sum_{n=m}^{{\lfloor m^{1/{\s}} \rfloor}} \left( m^{2\s} + \frac{n^{2\s}}{\log n} + \left(\frac{n}{m}\right)^{\frac{2\s}{1-\s}}\right) \cdot \left( \frac{1}{n^{2\s}} - \frac{1}{(n+1)^{2\s}}\right)  \right)\\
&\lesssim \omega'(1/2)^2  \cdot \left( 1 + \sum_{n=m}^{{\lfloor m^{1/{\s}} \rfloor}} \frac{1}{n \log n} + m^{\frac{-2\s}{1-\s}} \cdot \left( \sum_{n=m}^{{\lfloor m^{1/{\s}} \rfloor}} n^{\frac{2\s}{1-\s}-1-2\s}\right) \right) \\
&:= \omega'(1/2)^2 \cdot \left( 1+ \Sum_1 +  \Sum_2 \right).
\end{align*}
where we used that $\left( \frac{1}{n^{2\s}} - \frac{1}{(n+1)^{2\s}}\right) \approx n^{-1-2\s}$.

Since $2\s ( \frac{1}{1-\s} -1) -1 > -1$, we have a bound for $\Sum_2$:
\[\Sum_2 \lesssim m^{\frac{-2\s}{1-\s} + \frac{1}{\s} \cdot \frac{2\s^2}{1-\s}} = 1.\]

On the other hand,
\[\Sum_1 \lesssim [\log \log x]_{m}^{m^{1/\s}} = \log (1/\s)\]
which is a constant. This concludes the proof.
\end{proof}

\begin{rem}
A similar proof is possible, based on a $p \neq 2$ version of the previous, but unfortunately, a different value of $p$ would have required that the denominator in the term $\frac{n^{\s}}{\sqrt{\log n}}$ from \eqref{eqn6} be replaced with $\frac{n^{\s}}{(\log n)^{1/p}},$ which is a stronger assumption for $p<2$.
\end{rem}

\medskip

\subsection{Further remarks on Theorem \ref{thm-3.1}: towards the needed estimates on M\"obius function sums}
$\;$\newline
Clearly, assumption \eqref{eqn6} plays a key role in the proof of Theorem \ref{thm-3.1}. In order to provide some insights into \eqref{eqn6}, we show estimates which involve the classical \emph{Mertens function}.

Recall that the Mertens function $M:\N \rightarrow \N$ is defined by
\[M(\imath) = \sum_{n=1}^\imath \mu(n),\]
and many authors have studied possible relations with the Riemann zeta function, some of which are present in \cite{BateDia}. The following is already included in \cite{BaDu1}:

\medskip

\begin{prop} For $n, m \in \N$ we have
\[G(n,m) = 1+ \sum_{\imath=1}^{ \left \lfloor \frac{n}{m} \right \rfloor} \left( M(m) -M(n/\imath)\right).\]
\end{prop}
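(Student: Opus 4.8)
The plan is to derive the formula from the standard identity $\sum_{k=1}^n \mu(k)\lfloor n/k\rfloor = 1$ together with a Dirichlet-type interchange of the order of summation; throughout I read $M(n/t)$ as $M(\lfloor n/t\rfloor)$, the usual convention $M(x)=\sum_{j\le x}\mu(j)$. First I would dispose of the trivial range $m\ge n$: there $\lfloor n/k\rfloor=0$ for $k>n$, so $G(n,m)=G(n,n)=1$, while $\lfloor n/m\rfloor\le 1$ makes the right-hand sum either empty or equal to its single term $M(m)-M(n)=0$; in both subcases the identity reads $1=1$. Hence we may assume $m<n$, so that $L:=\lfloor n/m\rfloor\ge 1$.

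For this range, splitting the defining sum at $k=m$ and invoking $\sum_{k=1}^n\mu(k)\lfloor n/k\rfloor=1$ gives
$$G(n,m)=1-\sum_{k=m+1}^{n}\mu(k)\left\lfloor\frac{n}{k}\right\rfloor,$$
so it suffices to prove $\sum_{k=m+1}^{n}\mu(k)\lfloor n/k\rfloor=\sum_{t=1}^{L}\bigl(M(n/t)-M(m)\bigr)$. I would write $\lfloor n/k\rfloor=\#\{t\ge 1: tk\le n\}$ and interchange the two sums, producing a sum over pairs $(k,t)$ with $m<k$, $t\ge 1$, $tk\le n$. Grouping by $t$ rewrites the condition on $k$ as $m<k\le n/t$, so the inner sum becomes $\sum_{m<k\le n/t}\mu(k)$.

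The only delicate point is pinning down the range of $t$ and matching the inner sum with $M(n/t)-M(m)$. Since $L\le n/m$, for every $t\le L$ we have $n/t\ge n/L\ge m$, hence $\lfloor n/t\rfloor\ge m$ and $\sum_{m<k\le n/t}\mu(k)=M(\lfloor n/t\rfloor)-M(m)=M(n/t)-M(m)$; this remains valid at the boundary $\lfloor n/t\rfloor=m$, where both sides vanish. Conversely, for $t\ge L+1$ we have $t>n/m$, so $n/t<m$ and the inner sum is empty. Thus only $1\le t\le L$ contribute, giving exactly $\sum_{t=1}^{L}\bigl(M(n/t)-M(m)\bigr)$; substituting back and writing $M(n/t)-M(m)=-(M(m)-M(n/t))$ yields the claimed formula. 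The main obstacle is purely bookkeeping: handling the floor functions and the endpoint $t=L$ so that the surviving $t$-range is precisely $\{1,\dots,L\}$ and the inner sum coincides with $M(n/t)-M(m)$ on the boundary.
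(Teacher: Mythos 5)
Your proof is correct. It reaches the identity by a genuinely different decomposition than the paper's. You first invoke $\sum_{k=1}^{n}\mu(k)\lfloor n/k\rfloor=1$ to reduce the claim to the complementary tail $\sum_{k=m+1}^{n}\mu(k)\lfloor n/k\rfloor$, then expand $\lfloor n/k\rfloor=\#\{t\ge 1: tk\le n\}$ and interchange the order of summation, so the range $1\le t\le \lfloor n/m\rfloor$ and the inner sums $M(n/t)-M(m)$ fall out directly from the lattice-point count. The paper instead stays on the original range $1\le k\le m$, partitions it by the level sets of $\lfloor n/k\rfloor$, performs an Abel summation to obtain $\lfloor n/m\rfloor M(m)+\sum_{t=\lfloor n/m\rfloor+1}^{n}M(n/t)$, and only then substitutes the $m=n$ specialization $\sum_{t=1}^{n}M(n/t)=1$ of its own intermediate formula. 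Both arguments ultimately rest on the same standard identity $\sum_{k\le n}\mu(k)\lfloor n/k\rfloor=1$; what your route buys is the elimination of the boundary term $\lfloor n/m\rfloor\bigl(M(m)-M(n/(\lfloor n/m\rfloor+1))\bigr)$ and of the level-set bookkeeping, at the cost of having to treat the degenerate range $m\ge n$ separately (which you do correctly) and to check the endpoint behaviour at $t=\lfloor n/m\rfloor$, which you also handle; the paper's route has the minor side benefit of producing the auxiliary identity $\sum_{t=1}^{n}M(n/t)=1$ along the way. Your explicit remark that $M(n/t)$ must be read as $M(\lfloor n/t\rfloor)$ is a point the paper leaves implicit.
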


\begin{proof}
Note that
\begin{align*}
\sum_{k=1}^m \mu(k) \left \lfloor \frac{n}{k} \right \rfloor &= \sum_{\imath = \left \lfloor \frac{n}{m} \right \rfloor}^n \imath \sum_{\stackrel{1 \leq k \leq m}{\left \lfloor \frac{n}{k} \right \rfloor = \imath}} \mu(k) \\
&= \left \lfloor \frac{n}{m} \right \rfloor \left(M(m) - M(\frac{n}{\left \lfloor \frac{n}{m} \right \rfloor +1})\right) + \sum_{\imath = \left \lfloor \frac{n}{m} \right \rfloor + 1}^n \imath \left( M(\frac{n}{\imath}) - M(\frac{n}{\imath+1})\right) \\
&= \left \lfloor \frac{n}{m} \right \rfloor M(m) + \sum_{\imath = \left \lfloor \frac{n}{m} \right \rfloor + 1}^n  M(\frac{n}{\imath}).
\end{align*}
Then, using the fact that we just proved for $m=n$ we have
\[1 = \sum_{k=1}^n \mu(k) \left \lfloor \frac{n}{k} \right \rfloor = \sum_{\imath=1}^n M(n/\imath).\]
This can be substituted above to give

\[\sum_{k=1}^m \mu(k) \left \lfloor \frac{n}{k} \right \rfloor =  \left \lfloor \frac{n}{m}  \right\rfloor M(m) +1 - \sum_{\imath = 1}^{\left \lfloor \frac{n}{m} \right \rfloor}  M(\frac{n}{\imath}) = 1 + \sum_{\imath=1}^{\left \lfloor \frac{n}{m} \right \rfloor} \left(M(m) - M(n/\imath)\right),\]
as claimed.
\end{proof}

We can take advantage of this proposition to show that two values of $n$ that are close to each other satisfy \eqref{eqn6} with similar functions $\omega$, and thus we might only need to prove \eqref{eqn6} for a \emph{dense enough} grid of values $m,n$.

\begin{prop}
Let $m \in \N$, $n=qm+d$ and $n'=qm+d'$ with $d, d' \in \{0,...,m-1\}, q \in \N$. Then
\[|G(n,m)- G(n',m)| \lesssim q + |d-d'| \log r,\]
where $r = e+ \min (|d-d'|, q)$.
\end{prop}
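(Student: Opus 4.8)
The plan is to reduce the difference $G(n,m)-G(n',m)$ to a short sum of Mertens increments via the preceding proposition, and then estimate each increment by a floor difference. Since $0\le d,d'\le m-1$, both $n=qm+d$ and $n'=qm+d'$ satisfy $\lfloor n/m\rfloor=\lfloor n'/m\rfloor=q$. Substituting into the identity $G(n,m)=1+\sum_{t=1}^{\lfloor n/m\rfloor}\bigl(M(m)-M(n/t)\bigr)$ of the previous proposition, the constant $1$ and every $M(m)$ term cancel, leaving
\[
G(n,m)-G(n',m)=\sum_{t=1}^{q}\bigl(M(n'/t)-M(n/t)\bigr).
\]
By symmetry we may assume $d\ge d'$ and set $\delta:=d-d'=n-n'\ge 0$.

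Next I would bound each summand. Since $M(x)=\sum_{j\le x}\mu(j)$ and $|\mu|\le 1$, for each $t$ we have $|M(n'/t)-M(n/t)|\le |\lfloor n/t\rfloor-\lfloor n'/t\rfloor|$. As $n/t-n'/t=\delta/t$, the elementary inequality $\lfloor x\rfloor-\lfloor y\rfloor\le x-y+1$ (for $x\ge y$) gives $|\lfloor n/t\rfloor-\lfloor n'/t\rfloor|\le \delta/t+1$. The key observation, and the source of the refinement $\log r$ rather than $\log q$, is that as soon as $t>\delta$ we have $0\le \delta/t<1$, so the floor difference lies in $\{0,1\}$; hence $|M(n'/t)-M(n/t)|\le 1$ for every $t>\delta$.

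Then I would split the sum at $t_0:=\min(\delta,q)$. The terms with $t_0<t\le q$ each contribute at most $1$, hence at most $q$ in total. For the terms with $1\le t\le t_0$ I use $\delta/t+1$, so their contribution is at most $\delta\,H_{t_0}+t_0$, where $H_{t_0}$ is the $t_0$-th harmonic number. Combining $H_{t_0}\le 1+\log t_0\lesssim \log(e+t_0)$ with $t_0\le q$ gives a bound $\lesssim \delta\log(e+t_0)+q=\delta\log r+q$ for this range, with $r=e+\min(\delta,q)$. Adding the two ranges yields
\[
|G(n,m)-G(n',m)|\le q+\delta\log r+q\lesssim q+|d-d'|\log r,
\]
as claimed.

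The only genuinely delicate point is the dichotomy above: one must resist estimating the whole sum crudely by $\delta H_q+q$, which would produce the weaker $\log q$; instead one exploits that the floor differences collapse to $\{0,1\}$ once $t$ exceeds $|d-d'|$, capping the harmonic contribution at $H_{\min(|d-d'|,q)}$ and thus producing $\log r$.
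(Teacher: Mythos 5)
Your proof is correct and follows essentially the same route as the paper's: reduce $G(n,m)-G(n',m)$ to $\sum_{t=1}^{q}\bigl(M(n'/t)-M(n/t)\bigr)$ via the preceding proposition, bound each Mertens increment by a floor difference of size at most $|d-d'|/t+1$, and observe that the harmonic contribution is capped at $t\le\min(|d-d'|,q)$ since the floor differences collapse to $\{0,1\}$ beyond that point. The paper encodes this last observation by writing the increment bound as $\lfloor |d-d'|/t\rfloor+1$ rather than by an explicit split of the sum, but the argument is the same.
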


Notice $q \leq n/m$ and so this principle works when $d-d'$ is small compared to the right-hand side in \eqref{eqn6}.

\begin{proof}
First, upon applying the previous Proposition, and having in mind that
\[|M(x)-M(y)| \leq |x-y| +1,\]
we obtain
\begin{align*}
\left|\sum_{k=1}^m \mu(k) \left(\left\lfloor \frac{n}{k} \right\rfloor - \left\lfloor \frac{n'}{k} \right\rfloor\right)\right| &= \left|\sum_{\imath=1}^q \left( M(n/\imath) - M(n'/\imath) \right)\right|\\
\leq \sum_{\imath=1}^q \left( \left\lfloor \frac{|d-d'|}{t} \right\rfloor +1 \right) &\leq q + |d-d'| \sum_{\imath=1}^r \frac{1}{\imath}.
\end{align*}
\end{proof}

\section{Further approximation of $\mathds{1}$ in $\Span \{r_k: \, k\geq 2\}$ in the $X^2_{-2}$ norm}\label{section 4}

As we discussed in Section \ref{section 2}, the work of Balazard and Saias \cite{BalazardSaias4} suggests natural candidates for approximating $\mathds{1}$ in $\Span \{r_k: \, k\geq 2\}$ in the $X^2_{-2}$ norm, by using analogues of the approximations appearing in \cite{BaDu1}, namely,
$$\sum_{k=2}^m r_k \frac{\mu(k)}{k}.$$
Previously, we modified such approach and considered the sequences $F_m$ given by \eqref{eqn1100}, that is,
$$
F_m= \mathds{1} + \left( \sum_{k=2}^m r_k \frac{\mu(k)}{k} - r_m \sum_{k=1}^m  \frac{\mu(k)}{k} \right),
$$
as an attempt at approximating $\mathds{1}$ in the scheme of Theorem \ref{thmA}.

The aim of this section is approximating $\1$ by a linear combination of a \emph{selection} of $\{r_k: 2 \leq k \leq n\}$ which provides a different insight in order to determine zero-free regions of $\zeta$. In particular, for each $m\geq 2$ we will consider approximations with linear combinations of the $r_k$'s such that $k$ divides $m$, that is, $\Span \{r_k: 2 \leq k  | m\}$.  More precisely, instead of $F_m$ we will consider the sequences

\begin{equation}\label{eqn2100}
F'_m := \mathds{1} + \left( \sum_{\begin{smallmatrix} k=2 \\ k|m\end{smallmatrix}}^m r_k \frac{\mu(k)}{k} - r_m \sum_{\begin{smallmatrix} k=1 \\ k|m\end{smallmatrix}}^m  \frac{\mu(k)}{k} \right).
\end{equation}

In addition, we will restrict ourselves to the subsequence of positive integers $\{m_\jmath\}$  defined as the product of the first $\jmath$ prime numbers, that is, $m_\jmath= \prod_{i=1}^\jmath p_i$ is the prime decomposition of $m_\jmath$. For the sake of simplicity, in what follows we will denote $F'_{m_\jmath}$ by $F'_{m}$.

\smallskip

This choice of approximation also satisfies the assumptions in Theorem \ref{thmA} since, for fixed $n \in \N$ and as $t\rightarrow \infty$, each coefficient $F'_m(n)$ converges towards the corresponding $F_m(n)$. However, as we will show, the conclusions we can obtain are different in nature.  The point of establishing a different selection of the set of $r_k$ here is that the necessary estimates on the M\"obius function sums may have a better chance of being proved, since the arithmetic properties of the M\"obius function with respect to the divisors of a number are well known.

\medskip

A standard property of the M\"obius function that will be useful for us is the following  (see \cite[Theorem 2.16]{BateDia}):
\begin{lem}\label{lemZ}
Let $d \in \N$. Then \[\sum_{\begin{smallmatrix}k=1\\k|d\end{smallmatrix}}^{d} \mu(k) = \delta_d.\]
\end{lem}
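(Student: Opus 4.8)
The plan is to separate the trivial case $d=1$ from the generic case $d>1$ and to exploit the fact that $\mu$ is supported on squarefree integers. When $d=1$ the only divisor is $k=1$, and since $\mu(1)=1$ the sum equals $1=\delta_1$, as required. So the content lies entirely in showing the sum vanishes for $d>1$.

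For $d>1$ I would first discard every divisor that is not squarefree, since $\mu$ vanishes on those. Writing the distinct prime factors of $d$ as $p_1,\dots,p_r$ with $r\geq 1$, the squarefree divisors of $d$ are precisely the products $\prod_{i\in S}p_i$ indexed by subsets $S\subseteq\{1,\dots,r\}$, and for such a divisor one has $\mu\bigl(\prod_{i\in S}p_i\bigr)=(-1)^{|S|}$. Grouping the sum according to the size $j=|S|$ and counting the $\binom{r}{j}$ subsets of each size gives
\[
\sum_{\substack{k=1\\k|d}}^{d}\mu(k)=\sum_{j=0}^{r}\binom{r}{j}(-1)^{j}=(1-1)^{r}=0,
\]
by the binomial theorem, which is exactly $\delta_d$ for $d>1$.

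The key steps are thus: (i) reduce to squarefree divisors; (ii) establish the bijection between squarefree divisors and subsets of the prime-factor set; (iii) apply the binomial theorem. There is no genuine obstacle here, as the statement is a classical identity, and the only point requiring a little care is the bookkeeping in step (ii), ensuring that each squarefree divisor is counted exactly once with the correct sign. Alternatively, one may invoke multiplicativity of the divisor-sum function $d\mapsto\sum_{k|d}\mu(k)$ and verify the identity on prime powers, where $\sum_{k|p^a}\mu(k)=\mu(1)+\mu(p)=1-1=0$ for $a\geq 1$; the product over prime-power factors then vanishes whenever $d>1$ and equals $1$ when $d=1$. Either route is elementary; I would favour the binomial computation as it is self-contained and avoids appealing to the multiplicativity machinery.
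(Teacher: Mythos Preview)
Your argument is correct and entirely standard; both the binomial and the multiplicativity routes you describe are valid. Note that the paper does not actually prove this lemma: it simply cites \cite[Theorem~2.16]{BateDia} and treats the identity as well known, so there is no ``paper's own proof'' to compare against.
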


\noindent Analogously as in Section  \ref{section 2}, we introduce two double indexed functions
\[G'(n,m)=\sum_{\begin{smallmatrix}k=1\\k|m\end{smallmatrix}}^m \mu(k) \left\lfloor \frac{n}{k} \right\rfloor,\]
\[H'(n,m)= \sum_{\begin{smallmatrix}k=2\\k|m\end{smallmatrix}}^m \mu(k) \left\{ \frac{n}{k} \right\},\]
and deduce:
\begin{equation}\label{eqn2321}
F'_m (n) = 1 + H'(n,m) - r_m(n) \sum_{\begin{smallmatrix}k=1\\k|m\end{smallmatrix}}^m  \frac{\mu(k)}{k},
\end{equation}
and
\begin{equation}\label{eqn2322}
F'_m(n) = 1 - G'(n,m) + m \left\lfloor \frac{n}{m} \right\rfloor  \sum_{\begin{smallmatrix}k=1\\k|m \end{smallmatrix}}^m   \frac{\mu(k)}{k}.
\end{equation}

Consider $\jmath\in \N$ and $m \in \N$, with $m= \prod_{i=1}^\jmath p_i$ both fixed.
Now, from the definition of $F'_m$, and for $n \leq p_\jmath$, notice that $r_k(n)-r_m(n) = 0$ for all $k > p_\jmath$, and thus
\begin{equation}\label{eqn2200}
F'_m(n) =F_{p_\jmath}(n)=0, \quad n \leq p_\jmath.
\end{equation}

Apart from this, we remark a key distinction of $F'_m$ with respect to $F_m$: \emph{$F'_m$ is $m$-periodic} since $r_k$ is $m$-periodic whenever $k |m$.

In $D_\alpha$ spaces for $\alpha \in [-3,-2]$, any $m$-periodic sequence (of Taylor coefficients) defines a function $f$ that has a norm $\|f\|_\alpha$ between $\|q_m(f)\|_\alpha$ and  $2\|q_m(f)\|_\alpha$, where $q_m(f)$ is the Taylor polynomial of $f$ of degree less or equal to $m$. Therefore, in order to show boundedness of $\|F'_m\|_{\alpha}$ we just need to show boundedness of the norm contributed by the coefficients of order $p_\jmath < n< m$ (when $n=m$ then $F'_m(n)=1$). This finite-dimensionality implies, among other things, that all arguments about whether a space is closed are automatically resolved.

\smallskip

By making use of the standard notation $\# E$ for the cardinal of a set $E$ and $(u,v)$ for the greatest common divisor of $u$ and $v$, the first basic result towards understanding the remaining case is the following:

\begin{thm}\label{lemY}
Let $1 \leq n < m$. Then
\[F'_m(n) = -\# \{ 1< k \leq n : (k,m)=1\}.\]
\end{thm}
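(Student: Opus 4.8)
The plan is to start from the explicit formula \eqref{eqn2322} for $F'_m(n)$ and to interpret the resulting Möbius sum combinatorially. Since $1 \leq n < m$, the integer part $\lfloor n/m\rfloor$ vanishes, so the last term of \eqref{eqn2322} disappears and we are left with the clean identity
\[F'_m(n) = 1 - G'(n,m) = 1 - \sum_{\substack{k=1\\ k|m}}^m \mu(k)\left\lfloor \frac{n}{k}\right\rfloor.\]
Thus the whole statement reduces to evaluating the divisor sum $G'(n,m)$, and, after isolating the trivially coprime value $j=1$ (which will account for the extra $+1$), the claim becomes equivalent to
\[\sum_{\substack{k=1\\k|m}}^m \mu(k)\left\lfloor \frac{n}{k}\right\rfloor = \#\{1 \leq j \leq n : (j,m)=1\}.\]

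The key step is to recognize $G'(n,m)$ as an inclusion--exclusion count of the integers in $[1,n]$ coprime to $m$. I would write the indicator of coprimality by means of the fundamental Möbius identity of Lemma \ref{lemZ}, namely that $\sum_{d|N}\mu(d) = \delta_N$ equals $1$ when $N=1$ and $0$ otherwise, applied to $N=(j,m)$. This gives
\[\#\{1\leq j \leq n : (j,m)=1\} = \sum_{j=1}^n \sum_{\substack{d|j\\ d|m}} \mu(d).\]
Interchanging the order of summation then groups the terms according to the common divisor $d$: for fixed $d\mid m$ the inner count of multiples of $d$ not exceeding $n$ is exactly $\lfloor n/d\rfloor$, whence
\[\sum_{j=1}^n \sum_{\substack{d|j\\ d|m}}\mu(d) = \sum_{\substack{d=1\\d|m}}^m \mu(d)\left\lfloor \frac{n}{d}\right\rfloor = G'(n,m).\]

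Finally I would separate the contribution of $j=1$, which always satisfies $(1,m)=1$, writing $\#\{1\leq j\leq n:(j,m)=1\} = 1 + \#\{1 < j \leq n : (j,m)=1\}$; substituting back into the reduced formula yields $F'_m(n) = 1 - G'(n,m) = -\#\{1 < j \leq n : (j,m)=1\}$, as desired. There is no genuine obstacle here: the argument is a short application of Möbius inclusion--exclusion, and the only points demanding care are the vanishing of the $\lfloor n/m\rfloor$ term (which is precisely where the hypothesis $n<m$ enters) and the bookkeeping of the boundary term $j=1$ that converts the full coprime count into the count over $k>1$. I note in passing that the squarefreeness of $m=\prod_{i=1}^t p_i$ is not needed for the identity, since any non-squarefree divisor contributes nothing to the Möbius sum in any case; it is the restriction $1\le n<m$ that does the essential work.
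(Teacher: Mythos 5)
Your proof is correct, but it follows a genuinely different route from the paper's. The paper argues incrementally: it disposes of $n\le p_t$ via \eqref{eqn2200}, and for $n>p_t$ computes the jump $F'_m(n)-F'_m(n-1)$ directly from the definition, showing via $r_k(n)-r_k(n-1)$ for $k\mid m$ and an application of Lemma \ref{lemZ} to $D=(n,m)$ that the jump is $-1$ when $(n,m)=1$ and $0$ otherwise, then telescopes. You instead start from the closed form \eqref{eqn2322}, kill the last term using $\lfloor n/m\rfloor=0$, and identify $G'(n,m)=\#\{1\le j\le n:(j,m)=1\}$ by the standard M\"obius inclusion--exclusion (write the coprimality indicator as $\sum_{d\mid(j,m)}\mu(d)$ and interchange the sums). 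That identity is exactly the first display in the paper's own proof of Lemma \ref{lemT}, which appears just after Theorem \ref{lemY} and which the authors follow with the remark that its left-hand side equals $1-F'_m(n)$ --- so your argument in effect unifies Theorem \ref{lemY} with Lemma \ref{lemT} and is shorter and more conceptual. What the paper's telescoping proof buys in exchange is a local description of how $F'_m$ evolves as $n$ increases (constant off the integers coprime to $m$, dropping by $1$ at each of them), which is the structural picture motivating the choice of the approximant $s_t$ in \eqref{eqn2400}. Your closing observation that squarefreeness of $m$ is not needed is also correct: non-squarefree divisors contribute nothing to the M\"obius sums, and only the hypothesis $n<m$ is essential.
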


\begin{proof}
If $n \leq p_\jmath$ then we already know that the result is true from \eqref{eqn2200}. If $n > p_\jmath$, we just need to show that when $(n,m)=1$ we have
\begin{equation}\label{eqn2300}
F'_m(n)=-1+F'_m(n-1),
\end{equation}
whereas otherwise we have
\begin{equation}\label{eqn2301}
F'_m(n)=F'_m(n-1).
\end{equation}
Let us first assume $(n,m)=1$. Then $r_k(n)-r_k(n-1) =1$ for all $k |m$. Thus,
\[F'_m(n)-F'_m(n-1)=\left( \sum_{\begin{smallmatrix} k=2 \\ k|m\end{smallmatrix}}^m \frac{\mu(k)}{k} - \sum_{\begin{smallmatrix} k=1 \\
k|m\end{smallmatrix}}^m \frac{\mu(k)}{k} \right)=-1.\]

On the other hand, if $(n,m)=D \neq 1$, then $r_k(n)-r_k(n-1)=1$ for $k |m$ with $k \nmid n$ while $r_k(n)-r_k(n-1) = 1-k$ when $k | D$.
Hence, the only difference with the previous is contributed by $k$ dividing $D$ and in this case we have
 \[F'_m(n)-F'_m(n-1)=-1- \left( \sum_{\begin{smallmatrix} k=2 \\ k|D\end{smallmatrix}}^D \mu(k) - \sum_{\begin{smallmatrix} k=1 \\ k|D\end{smallmatrix}}^D \mu(k) \right) =-1+1=0.\]
Then, for any $n>p_\jmath$, \[F'_m(n) = -\sum_{\begin{smallmatrix}k=p_\jmath+1\\ (k,m)=1\end{smallmatrix}}^n 1,\] which is the claimed value.
\end{proof}

The game changes then, from approximating the function $\1$ to approximating $-F'_m$ by means of linear combinations of $r_k$. The following seems like a starting point on how to approximate $-F'_m$. We use the standard notation $\varphi$ for the Euler totient function. Define $\nu_\jmath$ as:
\begin{equation}\label{eqn2400}
\nu_\jmath=\frac{\varphi(m)-1}{m-p_\jmath}(r_{p_\jmath}-r_m).
\end{equation}
Notice $\nu_\jmath$ is a reasonable guess firstly in the sense that it is still $m$-periodic and its first $p_\jmath$ values are equal to 0; moreover, $\nu_\jmath(n)$ grows with $n$ at the adequate speed, meaning that $r_{p_\jmath}(n)-r_m(n)$ jumps by $p_\jmath$ at the multiples of $p_\jmath$ and remains constant elsewhere, implying an approximately linear pace of increase for $\nu_\jmath$ up to \[\nu_\jmath (m-1) \approx \varphi(m)-1=-F'_m(m-1).\]
So taking this approximation is a bet on the distribution of numbers relatively prime with $m$, between $p_\jmath+1$ and $m-1$, being equidistributed to some degree.

We will make use of the following standard estimate:

\begin{lem}\label{lemT}
Let $1 \leq n<m = \prod_{i=1}^{\jmath} p_i$. Then, as $\jmath \rightarrow \infty$, we have \[\sum_{\begin{smallmatrix}1\leq k\leq n \\ (k,m)=1\end{smallmatrix}} 1 = n \frac{\varphi(m)}{m} +\mathcal{O}(2^{\jmath}).\]
\end{lem}

\begin{proof} By Lemma \ref{lemZ}, we can see that
\[\sum_{\begin{smallmatrix}1\leq k\leq n \\ (k,m)=1\end{smallmatrix}} 1 = \sum_{k=1}^n \sum_{d | (k,m)} \mu(d) = \sum_{d | m} \mu(d) \sum_{\begin{smallmatrix}1\leq k\leq n \\ d | k \end{smallmatrix}} 1 = \sum_{d | m} \mu(d) \left\lfloor \frac{n}{d} \right\rfloor =: G'(n,m) .\]
Now, in the expression for $G'$ we use the decomposition $\left\lfloor \frac{n}{d} \right\rfloor = \frac{n}{d} - \left\{\frac{n}{d}\right\} = \frac{n}{d}  + O(1)$ to obtain
\[G'(n,m)= n \sum_{d|m} \frac{\mu(d)}{d} - H'(n,m) = \frac{n \varphi(m)}{m} + O(2^\jmath),\]
since $2^\jmath$ is the number of divisors of $m$.
\end{proof}
Notice the relations we used on $G'$ and $H'$, that \[G'(n,m)+H'(n,m) = \frac{n \varphi(m)}{m}\] is valid for all $n,m$.
This Lemma is relevant since the left-hand side in its statement is $1-F'_m(n)$. We would like to be able to apply Lemma \ref{lemT} together with the following:

\begin{prop}\label{lemS} For $n <m$,
\[\nu_{\jmath}(n) + n \frac{\varphi(m)}{m} = O\left(\frac{p_{\jmath}}{\log \log m}\right). \]
\end{prop}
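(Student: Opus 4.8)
The plan is to reduce the statement to an exact algebraic expression for $s_t(n)$ valid on the range $1 \le n < m$, and then to control the resulting error by a single classical estimate on the ratio $\varphi(m)/m$. First I would use that, for $1 \le n < m$, one has $r_m(n) = m\{n/m\} = n$, while $r_{p_t}(n) = p_t\{n/p_t\} = n - p_t\lfloor n/p_t\rfloor$. Substituting these into the definition \eqref{eqn2400} gives
\[
s_t(n) = \frac{\varphi(m)-1}{m-p_t}\bigl(r_{p_t}(n) - r_m(n)\bigr) = -\,\frac{(\varphi(m)-1)\,p_t\lfloor n/p_t\rfloor}{m-p_t}.
\]
Writing $p_t\lfloor n/p_t\rfloor = n - p_t\{n/p_t\}$ then separates a term linear in $n$ from a bounded remainder.

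Next I would add $n\,\varphi(m)/m$ and combine the two terms that are linear in $n$ by means of the elementary identity
\[
\frac{\varphi(m)}{m} - \frac{\varphi(m)-1}{m-p_t} = \frac{m-\varphi(m)p_t}{m(m-p_t)},
\]
which yields the exact expression
\[
s_t(n) + n\,\frac{\varphi(m)}{m} = n\cdot\frac{m-\varphi(m)p_t}{m(m-p_t)} + \frac{(\varphi(m)-1)\,p_t\{n/p_t\}}{m-p_t}.
\]
Bounding each summand with $0\le n < m$, $0 \le \{n/p_t\} < 1$, and $|m-\varphi(m)p_t|\le m + \varphi(m)p_t$ gives
\[
\left| s_t(n) + n\,\frac{\varphi(m)}{m}\right| \le \frac{m}{m-p_t} + \frac{2\varphi(m)p_t}{m-p_t}.
\]

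Finally comes the number-theoretic input, which is where all the weight of the argument rests. Since $m = \prod_{i=1}^t p_i$, the Chebyshev function satisfies $\log m = \sum_{i=1}^t\log p_i = \theta(p_t)\sim p_t$, so $p_t = o(m)$, hence $m-p_t\sim m$ and $m/(m-p_t)=\mathcal{O}(1)$. Moreover $\varphi(m)/m = \prod_{i=1}^t(1-1/p_i)$, and Mertens' third theorem gives $\varphi(m)/m \sim e^{-\gamma}/\log p_t$; combined with $\log\log m \sim \log p_t$ this yields $\varphi(m)/m = \mathcal{O}(1/\log\log m)$. Substituting into the bound above, the first term is $\mathcal{O}(1)$ and the second is $\mathcal{O}(p_t/\log\log m)$; since $p_t/\log\log m\to\infty$ the former is absorbed into the latter, and the proposition follows. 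I expect the reduction to be routine and the genuine content to be precisely this last paragraph: the statement is really the assertion that $\varphi(m)/m \asymp 1/\log\log m$ for primorial $m$, so the main point is to invoke Mertens' theorem together with $\theta(p_t)\sim p_t$ correctly rather than to overcome any structural obstacle.
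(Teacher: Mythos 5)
Your proof is correct and follows essentially the same route as the paper's: the identical algebraic identity isolating $n\bigl(\tfrac{\varphi(m)}{m}-\tfrac{\varphi(m)-1}{m-p_t}\bigr)$ plus a fractional-part remainder, followed by the estimate $\varphi(m)/m \asymp 1/\log\log m$ for primorial $m$. The only cosmetic difference is that you derive that estimate from Mertens' third theorem and $\theta(p_t)\asymp p_t$, whereas the paper cites it directly from Bateman--Diamond; your bookkeeping of the $\mathcal{O}(1)$ term being absorbed into $\mathcal{O}(p_t/\log\log m)$ is also slightly more explicit, but the substance is the same.
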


\begin{proof}
Since $n <m$ we know $r_m(n)=n$. Thus, \[(r_{p_{\jmath}}-r_m)(n)= p_{\jmath} \left\{ \frac{n}{p_{\jmath}}\right\} - n = -p_{\jmath} \left\lfloor \frac{n}{p_\jmath}\right\rfloor.\]
Substituting this value in $\nu_{\jmath}(n)$ yields
\[\nu_{\jmath}(n)=  - \left\lfloor \frac{n}{p_{\jmath}}\right\rfloor p_{\jmath} \frac{\varphi(m)-1}{m - p_\jmath}.\]
But we can decompose
\[\nu_{\jmath}(n) + n \frac{\varphi(m)}{m} = p_{\jmath}\left\{ \frac{n}{p_{\jmath}}\right\} \frac{\varphi(m)-1}{m-p_{\jmath}} + \frac{n}{m-p_{\jmath}} - \frac{n\varphi(m)p_{\jmath}}{m(m-p_{\jmath})}.\]
Since $m >> p_{\jmath}  \gtrsim \log m$ and $n<m$, the second term is a number between $0$ and $2$ and thus, clearly contributes at most $O\left( \frac{p_{\jmath}}{\log \log m}\right)$. From \cite[Lemma 4.13 and Theorem 4.15]{BateDia},  we see that  $\varphi(m)/m \approx \frac{1}{\log \log m}$. As the fractional part of a number is bounded by 1, we get the desired estimate for the first term as well. A mixture of all the same arguments gives the same bound for the last term.
\end{proof}

As a closing remark of this section, note that Lemma  \ref{lemS} along with Proposition \ref{lemT} yield that the contributions of some ranges of values of $n$ ($2^{2\jmath}\leq n <m$) to the norm of $\nu_{\jmath}+F'_m$ are unimportant in order to show uniform boundedness. Since $m$ has $\jmath$ different prime factors, when $\jmath$ is large it is to be expected that this unimportant range becomes most of the range of possible values, but good estimates at the first values of $n$ are needed.

\smallskip

Indeed, pointwise convergence is easy to see since $\|F'_m\|_{\alpha} \rightarrow 0$ as $t \rightarrow \infty$ whenever $\alpha <-3$: just estimating $|F'_m(n)|<n-p_{\jmath}$ when $n> p_{\jmath}$ (and 0 otherwise) shows that $\|F'_m(n)\|^2_\alpha \lesssim (p_{\jmath})^{3+\alpha}$, which vanishes as $\jmath \rightarrow \infty$. For large values of $\jmath$, adding $\nu_\jmath$ to the mix does not affect this convergence (its norm is less than that of $r_{p_{\jmath}}-r_m$, which for $\alpha < -3$ goes to 0 as $t \rightarrow \infty$).

\smallskip

The final observation is that one doesn't need to take $r_{p_{\jmath}}$ and in fact it makes sense to make use of all the divisors of $m$ to approximate the function $\sum_{\begin{smallmatrix}1\leq j\leq n \\ (j,m)=1\end{smallmatrix}} 1$.


\section{An extension of the work by Waleed Noor} \label{section 5}

The main aim of this final section is pushing further the recent approach of Waleed Noor \cite{WaleedNoor} to the context of weighted Dirichlet spaces in order to provide zero-free regions for the Riemann $\zeta$ function.

\smallskip

Let us denote by $\Psi$ the correspondence between $D_\alpha$ and $\ell^{2}_\alpha$ introduced in Subsection \ref{preliminares}, namely:

$$\begin{array}{lccc}
\Psi:\, & \ell^{2}_\alpha  & \longrightarrow & D_\alpha\\
\noalign{\medskip}
& \{ a_{k} \}_{k\geq 1}  & \longrightarrow &  \displaystyle \Psi\Big ( \{ a_{k} \}_{k\geq 1} \Big ) =\sum_{k=1}^{\infty} a_k z^{k-1}
\end{array}$$

\medskip

We have shown in Theorem \ref{thmA} that for $\alpha \in (-3,-2]$, the statement that the Riemann zeta function has no zeros on the band $\{z\in \mathbb{C}:\; \Re (z) > -\frac{\alpha+1}{2}\}$ can be derived provided we establish the existence of some linear combinations of $\{\Psi(r_k): k\geq 2\}$ approximating
$$
f_0(z):=\frac{1}{1-z}, \quad (z\in \D)
$$
in the $D_\alpha$ norm.

\smallskip

Waleed Noor's work \cite{WaleedNoor} addresses an analysis of the linear operator $T: D_{-2} \rightarrow D_0$ defined by
$$
Tf(z) = \frac{((1-z)f(z))'}{1-z}, \quad (z\in \D).
$$
In particular, he proves that such operator is bijective and bounded and identifies $T^{-1}(f_0)$ as well as $\{T^{-1}(\Psi(\{r_k(n)\}_{n=1}^{\infty})): k \geq 2\}$.

\smallskip

In order to extend previous results to $D_{\alpha}$,  we introduce the operators $T_a$ and $T_{a,h}$ where $a >0 $ and $h$ is a function as follows:
\[T_af(z) = \frac{((1-z)^{a} f(z))'}{(1-z)^{a}}, \]\[ T_{a,h}f(z) = \frac{((1-z)^{a} f(z))'}{(1-z)^{a}} \cdot h(z).\]

In particular, $T=T_1$; if $f_1\equiv 1$, $T_{a}=T_{a,f_1}$. We also have that $T_{a,h} = M_h \cdot T_a$ whenever such composition is defined (where $M_g$ denotes the operator of multiplication by the function $g$).

\begin{prop}\label{noor}
Let $\alpha \in [-3,-2)$ and $a >0$. The operator $T_{a}: D_{\alpha+2} \rightarrow D_\alpha$ is bijective and bounded if and only if $a \geq -\frac{1+\alpha}{2}$. Moreover, if $h$ is a bounded function in $D_\alpha$ with $|h(z)|>C>0$ for all $z \in \D$ and some constant $C$, the same holds for
$T_{a,h}$.
\end{prop}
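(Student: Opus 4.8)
The plan is to exploit the fact that $T_a$ is, up to conjugation by $M_{(1-z)^a}$, a first order differential operator. A direct computation gives, for $f(z)=\sum_{n\ge 0}a_nz^n$,
\[T_af=f'-\frac{a}{1-z}\,f,\]
so that solving $T_af=g$ amounts to the linear ODE $((1-z)^af)'=(1-z)^ag$, whose homogeneous solutions are exactly the scalar multiples of $(1-z)^{-a}$. Passing to Taylor coefficients and writing $A_n=\sum_{m=0}^n a_m$, this becomes the recursion $(n+1)A_{n+1}=(n+1+a)A_n+g_n$, which I would use as the backbone of the whole argument.

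I would first dispose of boundedness, which in fact holds for every $a>0$: differentiation maps $D_{\alpha+2}\to D_\alpha$ boundedly since it lowers the weight by exactly $2$, and the operator $a_n\mapsto A_n$ (multiplication by $(1-z)^{-1}$) maps $D_{\alpha+2}\to D_\alpha$ boundedly by the weighted discrete Hardy inequality $\sum_n(n+1)^\alpha|A_n|^2\lesssim\sum_n(n+1)^{\alpha+2}|a_n|^2$, whose Muckenhoupt condition is satisfied precisely because $\alpha<-1$ throughout $[-3,-2)$. Thus the genuine content of the statement is bijectivity. For injectivity, $T_af=0$ forces $f=c(1-z)^{-a}$; since the coefficients of $(1-z)^{-a}$ grow like $n^{a-1}/\Gamma(a)$, one has $(1-z)^{-a}\in D_{\alpha+2}$ iff $2(a-1)+(\alpha+2)<-1$, i.e. iff $a<-\tfrac{1+\alpha}{2}$. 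Hence $T_a$ has a nontrivial kernel exactly when $a<-\tfrac{1+\alpha}{2}$, which already establishes the ``only if'' direction and, simultaneously, injectivity on the remaining range.

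For surjectivity when $a\ge-\tfrac{1+\alpha}{2}$, I would construct an explicit right inverse. Setting $\Pi_n=\prod_{j=1}^n\frac{j+a}{j}\sim n^a/\Gamma(1+a)$, the recursion has a distinguished ``decaying'' solution
\[A_n=-\,\Pi_n\sum_{k\ge n}\frac{g_k}{(k+1)\Pi_{k+1}},\]
obtained by choosing the free constant $A_0$ so as to annihilate the growing homogeneous mode. One then recovers $a_n=\tfrac{a}{n}A_{n-1}+\tfrac{g_{n-1}}{n}$ and must verify $\|f\|_{\alpha+2}\lesssim\|g\|_\alpha$, which reduces to a weighted Hardy bound for the map $g\mapsto(A_n)$ on $\ell^2_\alpha$. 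For $a$ strictly above the threshold this follows from a Schur test with weight $p_n=n^{-\beta}$ for any $\beta$ satisfying $-a-\tfrac{\alpha}{2}<\beta<a+\tfrac{\alpha}{2}+1$, an interval that is nonempty precisely when $a>-\tfrac{1+\alpha}{2}$; bijectivity and boundedness of the inverse then follow from the open mapping theorem. The main obstacle is exactly the endpoint $a=-\tfrac{1+\alpha}{2}$: there the admissible interval for $\beta$ collapses, the associated Hardy inequality becomes logarithmically critical, and the power-weight Schur test degenerates, so this borderline case must be handled with the sharp endpoint estimate rather than by the soft argument available for $a>-\tfrac{1+\alpha}{2}$.

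Finally, for the ``moreover'' part I would write $T_{a,h}=M_h\,T_a$. Since $|h(z)|>C>0$ and $h$ is bounded, both $h$ and $1/h$ lie in $H^\infty\subset D_0\subset D_\alpha$, and as bounded analytic functions are multipliers of $D_\alpha$ for $\alpha<0$, the operator $M_h$ is bounded and invertible on $D_\alpha$ with inverse $M_{1/h}$. Composing with the previous conclusion, $T_{a,h}\colon D_{\alpha+2}\to D_\alpha$ is bijective and bounded if and only if $T_a$ is, that is, if and only if $a\ge-\tfrac{1+\alpha}{2}$.
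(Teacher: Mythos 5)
Your route is genuinely different from the paper's: where the paper proves surjectivity by constructing an explicit Riesz basis $\{g_k\}$ of $D_{\alpha+2}$ whose image $\{T_a g_k\}$ is again a Riesz basis of $D_\alpha$, you solve the coefficient recursion $(n+1)A_{n+1}=(n+1+a)A_n+g_n$ in closed form and control the resulting right inverse by a Schur test. The pieces you do prove are correct, and in places cleaner than the paper's: boundedness for every $a>0$ via the weighted discrete Hardy inequality (the paper instead infers boundedness of $M_{\frac{1}{1-z}}$ from Noor's boundedness of $T$), the identification of the kernel as the multiples of $(1-z)^{-a}$ together with the criterion $(1-z)^{-a}\in D_{\alpha+2}$ iff $a<-\frac{1+\alpha}{2}$ (this is exactly the paper's injectivity argument), the Schur-test window $\bigl(-a-\frac{\alpha}{2},\,a+\frac{\alpha}{2}+1\bigr)$, and the reduction of $T_{a,h}$ to $T_a$ through the invertible multiplier $M_h$.

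The genuine gap is the endpoint $a=-\frac{1+\alpha}{2}$, which the statement includes and which you explicitly defer to ``the sharp endpoint estimate.'' No such estimate exists. At the endpoint one has $a+\frac{\alpha}{2}=-\frac12$, so after conjugating by the weights the kernel of your right inverse becomes comparable to $n^{-1/2}k^{-1/2}$ for $k\geq n$ (and $0$ otherwise), and this operator is unbounded on $\ell^2$: test it against $b_k=k^{-1/2}(\log k)^{-\gamma}$ with $\gamma\in(\tfrac12,\tfrac32]$. Worse, the failure is not an artifact of your particular choice of solution. Take $\alpha=-3$, $a=1$ (so the endpoint is $a=1$ and the map is $T_1:D_{-1}\to D_{-3}$) and $g_n=n/\log(n+2)$, which lies in $D_{-3}$. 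Every holomorphic solution of $((1-z)f)'=(1-z)g$ has coefficients
\begin{equation*}
a_n=c+\sum_{m=1}^{n}\frac{g_{m-1}-g_{m-2}}{m}=c+\log\log n+O(1),
\end{equation*}
so $\sum_n|a_n|^2(n+1)^{-1}=\infty$ for every choice of the integration constant $c$, and $g$ has no preimage in $D_{-1}$. Hence the borderline case cannot be closed by any refinement of your argument; you should either restrict the claim to $a>-\frac{1+\alpha}{2}$ or confront the discrepancy with the paper's Riesz-basis proof, which asserts surjectivity for all $a>0$ and therefore must itself degenerate at this endpoint.
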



\begin{proof}
We start with the \emph{boundedness}: we know from \cite{WaleedNoor} that $T$ is bounded from $D_{0}$ to $D_{-2}$, but the same proof yields boundedness $D_{\alpha+2} \rightarrow D_{\alpha}$. Now, $T_a = D - a M_{\frac{1}{1-z}}$, where $D$ is the derivative. From the definition of the spaces $D_\alpha$, given in terms of coefficients, it is easy to see that the derivative is a bounded operator from $D_{\alpha+2}$ to $D_{\alpha}$. Thus, $M_{\frac{1}{1-z}}$ must be bounded as the difference between two bounded operators $D-T$. Hence, $T_a$ is bounded. Now, $T_{a,h} = M_h T_a$ and the operator $M_h$ is bounded in  $D_{\alpha}\rightarrow D_{\alpha}$ for any bounded function: $H^{\infty}$ coincides with the set of multipliers in all Dirichlet spaces larger than $H^2$ \cite{Taylor}. That also gives that $M_{1/h}$ is a bounded operator $D_{\alpha}\rightarrow D_{\alpha}$.

\smallskip

Let us now see that $T_a D_{\alpha+2}=D_{\alpha}$. Suppose $\alpha \in[-3,-2)$, $a>0$ and notice as before that $T_a = D-aM{\frac{1}{1-z}} : D_{\alpha+2} \rightarrow D_\alpha$. Denote, for $k \geq -1$, \[t_k= (k+1)(k+2)^{\frac{\alpha}{2}} \mbox{ and }  d_k = \frac{(k+1)^{a-\alpha+\frac{1}{2}} }{a-\alpha+\frac{1}{2}}.\]
Then as $k$ becomes large we have \begin{equation}\label{eqn10001} d_k - d_{k-1}= (k+1)^{a-\alpha-\frac{1}{2}} ( 1 +o(1)).\end{equation}

We define functions $\{g_k\}_{k \in \N} \in D_{\alpha+2}$ (in fact, polynomials) by
\[g_k(z) = \frac{1}{t_k} \left( z^{k+1} - \frac{\sum_{n=0}^k (d_n-d_{n-1}) z^n}{d_k} \right)\]
for any $z \in \D$. Notice that $\{g_k\}_{k \in \N}$ is a basis of $(1-z) \mathcal{P}$  ($\mathcal{P}$ denotes the class of all polynomials). Thus it spans a dense subspace of $D_{\alpha+2}$. We are going to check the following claims:
\begin{enumerate}
\item[(A)] $\|g_k\|^2_{\alpha+2} \geq 1$ and $\|g_k\|^2_{\alpha+2} = 1 + o(1)$.
\item[(B)] $\{g_k\}_{k\in \N}$ is a Riesz basis for $D_{\alpha+2}$.
\item[(C)] $\|T_a g_k\|^2_{\alpha} \geq 1$ and $\|T_a g_k\|^2_{\alpha} = 1 + o(1)$.
\item[(D)] $\{T_a g_k\}_{k\in \N}$ is a Riesz basis for $D_{\alpha}$.
\end{enumerate}

Hence $T_a$ is a surjective operator (and in fact, any two different elements of $(1-z)D_{\alpha+2}$ will have different images).

Let us now prove the claims. We can compute directly \[\|g_k\|^2_{\alpha+2} = \left(\frac{k+2}{k+1}\right)^2 + \frac{1}{d_k^2} \sum_{n=0}^k (d_n-d_{n-1})^2 (n+1)^{\alpha+2},\] already showing that $\|g_k\|^2_{\alpha+2} >1$ indeed. Substituting \eqref{eqn10001} yields \[\|g_k\|^2_{\alpha+2} = 1 + O(\frac{1}{k+1}).\] This shows that (A) holds.
To check (B) we need to start by evaluating $\left\langle g_i , g_j \right\rangle_{\alpha+2}$ for $i>j$: From the definition of $g_i$ and of the inner product, we have
\[\left\langle g_i, g_j\right\rangle_{\alpha+2} = \frac{1}{t_i t_j d_i} \left( - (d_{j+1}- d_j)(j+2)^{\alpha+2} + \frac{1}{d_j} \sum_{n=0}^j (d_n-d_{n-1})^2 (n+1)^{\alpha+2}\right),\]
which, as $j$ grows has the asymptotic expression \[(1+o(1)) \cdot (i+2)^{\alpha+2} (a-\alpha+\frac{1}{2})(j+2)^{a+\frac{1-\alpha}{2}} \frac{-(a+\frac{3}{2})}{2a-\alpha+2},\] where we used that $2a  -\alpha +2$ is positive.
Next, we can take a linear combination $\sum_{\ell=0}^k a_\ell g_\ell$ and make sure that the contribution of the crossed terms in the expression of its norm is smaller than that of the diagonal terms. We start by using the expression we obtained for the inner products:
\[\left| 2 \Re{\sum_{\ell=0}^k \sum_{j=0}^{\ell-1} a_\ell\bar{a_j}\left\langle g_\ell, g_j\right\rangle_{\alpha+2}}\right| \leq C_0 \sum_{\ell=0}^k \sum_{j=0}^{\ell-1} \left(|a_\ell|^2+ |a_j|^2\right) (1+o(1)) (j+2)^{a+\frac{1-\alpha}{2}} (\ell+2)^{\frac{\alpha-3}{2}-a},\] where \[C_0= \frac{(a-\alpha + \frac{1}{2})(a+\frac{3}{2})}{2a-\alpha+2}.\]
Now, since $a +\frac{3-\alpha}{2} >0 > -a+\frac{\alpha-1}{2}$ this is bounded by
\[(1+o(1)) \frac{(a-\alpha+\frac{1}{2})(a+\frac{3}{2})}{(a+\frac{1-\alpha}{2})(a+ \frac{3-\alpha}{2})}\sum_{\ell=0}^k |a_\ell|^2 .\]
Since we know that $\|g_k\|^2_{\alpha+2} \geq 1$, we can then use that $\sum_{\ell=0}^k |a_\ell|^2  \leq \sum_{\ell=0}^k |a_\ell|^2\|g_\ell\|^2_{\alpha+2}$. To complete the proof of (B) we then need to check that \[(a-\alpha+\frac{1}{2})(a+\frac{3}{2}) < (a +\frac{1-\alpha}{2})(a+\frac{3-\alpha}{2})\] but this inequality simplifies to $\alpha^2+2\alpha> 0$ which is true for all $\alpha<-2$. Thus $\{g_k\}_{k \in \N}$ is a Riesz basis for $D_{\alpha+2}$.

The remainder of the claims (C) and (D) will follow a similar logic, where we start by evaluating $T_ag_k$: Since $T_a=D-aM_{\frac{1}{1-z}}$, for $z \in \D$ we have
 \[T_ag_k(z) = \frac{1}{t_k} \left( (k+1)z^k + \frac{a \sum_{n=0}^k z^n d_n  - \sum_{n=1}^k (d_n-d_{n-1})nz^{n-1}}{d_k}\right).\]
Grouping terms of the same order yields
 \[T_ag_k(z) = \frac{1}{t_k} \left( (k+1+a)z^k + \frac{ \sum_{n=0}^{k-1} z^n (a d_n  - (n+1)(d_n-d_{n-1}))}{d_k}\right).\]
Notice that this is a basis of $\mathcal{P}$, which is dense in $D_\alpha$.
The coefficient of $z^n$ in the above expression can be replaced by using 
\begin{equation}\label{eqn10002} 
a d_n  - (n+1)(d_n-d_{n-1})=(1+o(1))\frac{\alpha-\frac{1}{2}}{a-\alpha+\frac{1}{2}}(n+1)^{a - \alpha + \frac{1}{2}}.
\end{equation} 
Replace \eqref{eqn10002} to obtain the estimate on $\|T_ag_k\|^2_{\alpha}$ we need:

\[\|T_ag_k\|^2_\alpha = \left(\frac{k+1+a}{k+1}\right)^2 \left( \frac{k+2}{k+1} \right)^{-\alpha} + d_k^{-2} \cdot (1+o(1)) \frac{(\frac{1}{2} - \alpha)^{2}}{(a-\alpha+\frac{1}{2})^2} \sum_{n=0}^k (n+1)^{2a-\alpha+1}.\] The term arising from $z^k$ already contributes some number greater than 1 to the norm, and the total result differs from $1$ by some terms comparable to $\frac{1}{k+1}$, showing that (C) holds.

Finally, to check the Riesz condition, we assume again that $i>j$ and evaluate the crossed-terms of the inner products by first substituting the value of $d_k$:
\[\left\langle T_a g_i,  T_a g_j\right\rangle_{\alpha}= \frac{(1+o(1)) (\alpha-\frac{1}{2})}{t_i t_j d_i (a- \alpha +\frac{1}{2})} (j+1)^{a+\frac{3}{2}}\left(1+\frac{\alpha-\frac{1}{2}}{a-\alpha+\frac{3}{2}}\right).\]
Then making use of the values of $t_i$, $t_j$, and $d_i$ provides
\begin{equation}\label{eqn10003}\left\langle T_ag_i ,  T_ag_j\right\rangle_{\alpha} = \frac{(1+o(1)) (a+1)(\alpha-\frac{1}{2})}{(a- \alpha +\frac{3}{2})} (j+1)^{a+\frac{1-\alpha}{2}}(i+1)^{-a+\frac{\alpha-3}{2}}.\end{equation}
At this point, we can argue as in (B): notice $\alpha-\frac{1}{2} <0 <a-\alpha+\frac{3}{2}$ and $a+1>0$, and therefore
\[\left|2\Re{\sum_{i=0}^k \sum_{j=0}^{i-1} a_i \bar{a_j} \left\langle T_a g_i , T_a g_j \right\rangle_\alpha }\right|\leq \frac{(1+o(1))(\frac{1}{2}-\alpha)(a+1)}{a-\alpha+\frac{3}{2}} \cdot \left( \sum_{i=0}^k |a_i|^2 \left(\frac{1}{a+\frac{3-\alpha}{2}} + \frac{1}{a+\frac{1-\alpha}{2}}\right)\right).\]

Since we already know that $\|T_ag_i\|_\alpha^2 \geq 1$, we can write this as
\[\left|2\Re{\sum_{i=0}^k \sum_{j=0}^{i-1} a_i\bar{a_j} \left\langle T_a g_i , T_a g_j \right\rangle_\alpha }\right|\leq C_1 (1+o(1))\sum_{i=0}^k |a_i|^2\|T_a g_i\|^2_\alpha,\]
where \[C_1= \frac{(\frac{1}{2}-\alpha)(a+1)(2a+2-\alpha)}{(a-\alpha+\frac{3}{2})(a+\frac{3-\alpha}{2})(a+\frac{1-\alpha}{2})}.\]

We will be finished with showing (D) once we check that $C_1 <1$. Indeed, this is equivalent with
\[(\frac{1}{2}-\alpha)(a+1)(2a+2-\alpha)<(a-\alpha+\frac{3}{2})(a+\frac{3-\alpha}{2})(a+\frac{1-\alpha}{2}),\]
which simplifies to
\[a^3+\frac{5}{2}a^2+ \frac{\alpha^2+7}{4} a + \frac{-2 \alpha^3 + 3 \alpha^2 + 2 \alpha +1}{8}>0.\]
Since the coefficients on $a^3, a^2$ and $a$ are positive numbers and $a$ is itself positive, we just need to guarantee that $H(\alpha):=-2\alpha^3 + 3 \alpha^2 + 2 \alpha +1$ is also positive. Notice that $H'(\alpha)=0$ has no solutions for $\alpha \in[-3,-2]$ and thus the infimum of $H$ over that interval is at one of its extremes, but $H(-3)=76$ and $H(-2)=23$. This completes the proof of the surjectivity of $T_a$. Note that the surjectivity of $T_{a,h}$ follows having in mind that $M_h$ is invertible along with the claims [A-D].

\smallskip

Finally, it remains to see that \emph{$T_a$ is one-to-one}: if it is, then $T_{a,h}=M_h T_a$ is too, since $M_h$ is invertible.
Now, consider two functions $g_1$ and $g_2$ in $D_{\alpha+2}$ such that \[T_a g_1 = T_a g_2.\]
This means that $(1-z)^a (g_1-g_2)$ must be a constant function $c \in \C$ and if $c \neq 0$ we must have $(1-z)^{-a} \in D_{\alpha+2}$, which happens exactly when $a < -\frac{1+\alpha}{2}$. This concludes the proof.
\end{proof}

\begin{rem} Note that when $0 < a < -\frac{1+\alpha}{2}$, only the injectivity fails in the theorem. Our proof in that case implies that $(1-z)D_{\alpha+2}$ is mapped injectively to its image.
\end{rem}

\smallskip

For $k\geq 2$ let $h_k$ be the function introduced in \eqref{eqhk}, that is,
\begin{equation*}
h_k(z)= \frac{1}{1-z} \log \left(\frac{1+z+...+z^{k-1}}{k}\right), \quad (z \in \D).
\end{equation*}
Waleed Noor showed that
\begin{equation}\label{eqnA07}
T h_k = \Psi(r_k).
\end{equation}

In our setting, next results identify the inverse images of the special functions $f_0$ and $\Psi(r_k)$ under some of the introduced operators.

\begin{lem}
Let $g=(1-z)^c$, with $c \neq a$. Then \begin{equation}\label{eqnA08}
T_{a,g} \frac{(1-z)^{-c}}{c-a} = f_0.\end{equation}
In the particular case of $c=a-1 \geq 0$, for $k \geq 2,$ we also have
\begin{equation}\label{eqnA06}
T_{a,g} \left( \frac{h_k}{(1-z)^{c}}\right) =   \Psi(r_k).
\end{equation}
\end{lem}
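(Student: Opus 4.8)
The plan is to verify each identity by direct computation, exploiting the definitions of the operators $T_{a,h}$ and the known relation \eqref{eqnA07}.

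First I would prove \eqref{eqnA08}. Recall that $T_{a,b}$ denotes $T_{a,h}$ with $h(z)=(1-z)^{b-a}$ being understood from the notation $(a,b)$; by definition $T_{a,h}f = \frac{((1-z)^a f)'}{(1-z)^a}\cdot h$. Applying this to $f(z) = \frac{-(1-z)^{b-a}}{b}$, I first compute $(1-z)^a f(z) = \frac{-(1-z)^b}{b}$, whose derivative is $(1-z)^{b-1}$. Dividing by $(1-z)^a$ gives $(1-z)^{b-1-a}$, and then multiplying by $h(z)=(1-z)^{b-a}$ would give $(1-z)^{2b-1-2a}$, which is not $f_0$; so the correct reading must be $h(z)=(1-z)^{-(b-a)}=(1-z)^{a-b}$, i.e.\ $T_{(a,b)}$ abbreviates $T_{a,h}$ with the multiplier chosen precisely to cancel the $(1-z)^a$ factors down to $(1-z)^{-1}$. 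With that reading, multiplying $(1-z)^{b-1-a}$ by $(1-z)^{a-b}=(1-z)^{-(b-a)}$ yields $(1-z)^{-1}=f_0$, as claimed. The first step I would therefore take is to pin down, from the operator definitions, exactly which $h$ the shorthand $T_{(a,b)}$ encodes, and then the identity is a one-line chain-rule computation.

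Next I would establish \eqref{eqnA06} in the case $b=1\le a$. Here $T_{(a,1)}$ should mean $T_{a,h}$ with the multiplier $h(z)=(1-z)^{a-1}$, so that $T_{a,h} = M_{(1-z)^{a-1}}\circ T_a$. Applied to the candidate $\frac{h_k}{(1-z)^{a-1}}$, the plan is to first compute $T_a\!\left(\frac{h_k}{(1-z)^{a-1}}\right)$. Writing $g = (1-z)^{1-a}h_k$, one has $(1-z)^a g = (1-z)h_k$, and differentiating and dividing by $(1-z)^a$ gives $T_a g = (1-z)^{-a}\bigl((1-z)h_k\bigr)'$. Comparing with $T=T_1$, note $T h_k = \frac{((1-z)h_k)'}{1-z}$, so $T_a g = (1-z)^{1-a}\,T h_k$. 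Multiplying by $h(z)=(1-z)^{a-1}$ then returns $T h_k$, and invoking Waleed Noor's relation \eqref{eqnA07}, $Th_k=\Psi(r_k)$, completes the identity.

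The main obstacle, and the step I would treat most carefully, is fixing the precise meaning of the subscript shorthand $T_{(a,b)}$ versus the earlier notation $T_{a,h}$, since the two displayed identities force two different choices of multiplier $h$ (namely $(1-z)^{a-b}$ in the first and $(1-z)^{a-1}$ in the second, consistent with $b=1$). Once that convention is made explicit, both \eqref{eqnA08} and \eqref{eqnA06} reduce to routine applications of the product rule together with \eqref{eqnA07}; the only genuine analytic point to record is that $(1-z)^{b-a}/b$ and $h_k/(1-z)^{a-1}$ do lie in the relevant space $D_{\alpha+2}$ so that the operators are legitimately applied, which follows from the membership of $h_k$ in $D_{-2}$ used by Waleed Noor together with the boundedness of the multiplier $M_{(1-z)^{a-1}}$ established in Proposition \ref{noor}.
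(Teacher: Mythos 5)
Your proposal is correct and follows essentially the same route as the paper: a direct product-rule computation for \eqref{eqnA08} and a reduction to $Th_k=\Psi(r_k)$ via \eqref{eqnA07} for \eqref{eqnA06}. Your explicit resolution of the undefined shorthand $T_{(a,b)}$ as $T_{a,h}$ with $h(z)=(1-z)^{a-b}$ is consistent with what the paper's computation implicitly assumes, and your remark on membership of the arguments in $D_{\alpha+2}$ matches the remark the paper places before its proof.
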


\begin{rem}
Before proving the result, notice that $(1-z)^{-c} \in D_\beta$ if and only if $c < -\frac{\beta-1}{2}$. Thus for the functions we found $(1-z)^{-c}$ to be in $D_{\alpha+2}$, there is an additional requirement that $c<- \frac{\alpha+1}{2}$, in order to guarantee that the application of the operator is correct. With regards to $h_k (1-z)^{1-a}$ being elements of $D_{\alpha+2}$, notice $h_k \in D_{1-\varepsilon}$ for all $\varepsilon >0$: Waleed Noor \cite[Last line of proof of Lemma 7]{WaleedNoor} showed that these functions have coefficients bounded by a multiple of $k/n$, and thus $h_k(1-z)^{1-a}$ is in $D_\alpha+2$ precisely when $a > \frac{\alpha+3}{2}$. Since we are going to take $a \geq 1$ to apply the Lemma, $a$ is in the correct range of values.
\end{rem}

\begin{proof}
To see \eqref{eqnA08}, call $f(z)= \frac{(1-z)^{-c}}{c-a}$. By direct computation we can see that
\[T_{a,g} f = \frac{\frac{c}{c-a}(1-z)^{-c-1} \cdot (1-z) + \frac{a}{a-c} (1-z)^{-c}}{(1-z)^{1-c}}.\]
The right hand side simplifies to $f_0$.

In order to check \eqref{eqnA06}, fix $c=a-1 \geq 0$, and then we can use \eqref{eqnA07} to see that \[T_{a,g} \left( \frac{h_k}{(1-z)^{a-1}}\right) = \frac{\left( (1-z) \cdot h_k \right)'}{1-z} = Th_h = \Psi(r_k).\]
\end{proof}

Gathering all the previous results, the following extension of Waleed Noor's criterion holds, the previous result being the case $\alpha=-2$, $a=1$:

\begin{thm}
Let $\alpha \in [-3,-2]$, $a \in [1,\frac{1-\alpha}{2})$ and $\Omega_\alpha= \{ z \in \C: \Re(z) > \frac{-1-\alpha}{2}\}$.
The function $(1-z)^{1-a}$ is in the closure in $D_{\alpha+2}$ of $\Span\{(1-z)^{1-a}h_k:  k \geq 2\}$ if and only if $\Omega_\alpha$ is a zero-free region  of the Riemann zeta function.
\end{thm}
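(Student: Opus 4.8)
The plan is to transport the approximation problem out of $D_{\alpha+2}$ into the space $D_\alpha$, where Theorem \ref{thmA} and the Beurling--B\'aez-Duarte circle of ideas apply, using the operators built above. By the preceding Lemma, $T_{(a,1)}(-(1-z)^{1-a}) = f_0$ and $T_{(a,1)}((1-z)^{1-a}h_k) = \Psi(r_k)$, and a direct computation gives $T_{(a,1)} = M_{(1-z)^{a-1}}\circ T_a$. Since $a\in[1,\tfrac{1-\alpha}{2})$ forces $a\geq -\tfrac{1+\alpha}{2}$, Proposition \ref{noor} makes $T_a\colon D_{\alpha+2}\to D_\alpha$ a bounded bijection, hence a homeomorphism, while $M_{(1-z)^{a-1}}$ is a bounded (injective) multiplier on $D_\alpha$ because $(1-z)^{a-1}\in H^\infty$ for $a\geq 1$. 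Thus $T_{(a,1)}$ is bounded, and the boundary $-\tfrac{1+\alpha}{2}$ defining $\Omega_\alpha$ is exactly the one produced by Theorem \ref{thmA} with $p=2$.

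For the implication ``approximation $\Rightarrow$ zero-free region'', I would argue as follows. If $\sum_k c_{k,N}(1-z)^{1-a}h_k \to (1-z)^{1-a}$ in $D_{\alpha+2}$, then applying the bounded operator $T_{(a,1)}$ and using the Lemma gives $\sum_k c_{k,N}\Psi(r_k)\to -f_0$ in $D_\alpha$; equivalently, through the identification $\Psi$, the sequences $S_N=\1+\sum_k c_{k,N}\,r_k$ tend to $0$ in the $X^2_\alpha$-norm. Theorem \ref{thmA} with $p=2$ (valid for $\alpha\in(-3,-2]$) then yields that $\zeta$ has no zeros in $\Omega_\alpha$. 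The endpoint $\alpha=-3$ is trivial, as there $\Omega_\alpha=\{\Re(s)>1\}$ is automatically zero-free.

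The converse is the substantive half. One first invokes the equivalence (iff) form of the Beurling criterion, of which Theorem \ref{thmA} is the easy direction, to pass from ``$\Omega_\alpha$ zero-free'' to ``$f_0\in\overline{\Span\{\Psi(r_k)\}}$ in $D_\alpha$'', i.e.\ to the existence of $S_N=\1+\sum_k c_{k,N}\,r_k\to 0$ in $X^2_\alpha$. The difficulty is to pull this back through $T_{(a,1)}$ into $D_{\alpha+2}$. Unlike the case $a=1$ treated by Waleed Noor, where $T_{(1,1)}=T$ is bijective, for $a>1$ the operator $T_{(a,1)}$ is injective and bounded but \emph{not} surjective: the multiplier $(1-z)^{a-1}$ degenerates at $z=1$, so $M_{(1-z)^{a-1}}$ fails to be bounded below and the ``moreover'' clause of Proposition \ref{noor} does not apply. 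The plan is therefore to prove the one-sided estimate $\|T_{(a,1)}g\|_\alpha\gtrsim\|g\|_{\alpha+2}$ on the closed subspace $W=\overline{\Span\{(1-z)^{1-a}h_k,\ (1-z)^{1-a}\}}$; granting it, $T_{(a,1)}$ is a homeomorphism of $W$ onto $\overline{\Span\{\Psi(r_k),f_0\}}$, and setting $g_N=\sum_k c_{k,N}(1-z)^{1-a}h_k-(1-z)^{1-a}\in W$ one gets $T_{(a,1)}g_N\to 0$ in $D_\alpha$, whence $g_N\to 0$ in $D_{\alpha+2}$, which is exactly the desired approximation of $(1-z)^{1-a}$.

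I expect this boundedness-below estimate on $W$ to be the main obstacle. Since $T_a$ is already a homeomorphism, it reduces to showing $M_{(1-z)^{a-1}}$ is bounded below on $T_a(W)=\overline{\Span\{(1-z)^{1-a}\Psi(r_k),\ (1-z)^{-a}\}}$, that is, that no unit-norm element of this subspace can concentrate near $z=1$ fast enough to be annihilated by the vanishing factor $(1-z)^{a-1}$; note that the relevant target functions $f_0=(1-z)^{-1}$ and $\Psi(r_k)$ do lie in the range of $T_{(a,1)}$ precisely because $a<\tfrac{1-\alpha}{2}$, which is encouraging. I would attack this by adapting the explicit Riesz-basis construction from the proof of Proposition \ref{noor}, replacing the globally invertible multiplier by the degenerate one and re-examining the asymptotics of the crossed inner products on $W$; as an alternative I would try a Hahn--Banach/duality argument in the Hilbert space $D_{\alpha+2}$, showing that any $\psi$ orthogonal to every $(1-z)^{1-a}h_k$ but not to $(1-z)^{1-a}$ would, through the Mellin-transform mechanism behind Beurling's theorem, force a zero of $\zeta$ inside $\Omega_\alpha$.
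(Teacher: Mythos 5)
Your forward implication is essentially the argument the paper intends: $T_{(a,1)}=M_{(1-z)^{a-1}}T_a$ is bounded from $D_{\alpha+2}$ to $D_\alpha$ (Proposition \ref{noor} plus $(1-z)^{a-1}\in H^\infty$ for $a\geq 1$), the Lemma converts the approximation hypothesis into $S_N=\1+\sum_k c_{k,N}\,r_k\to 0$ in $X^2_\alpha$, and Theorem \ref{thmA} with $p=2$ finishes, the endpoint $\alpha=-3$ being vacuous. That half is complete and correct.

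The converse, however, is not proved: you have correctly located the difficulty rather than resolved it, and two steps remain missing. First, you invoke ``the equivalence (iff) form of the Beurling criterion'' to pass from ``$\Omega_\alpha$ zero-free'' to ``$f_0\in\overline{\Span\{\Psi(r_k):k\geq 2\}}$ in $D_\alpha$''; the paper only proves the one-sided Theorem \ref{thmA}, and the weighted-$\ell^2$ converse for general $\alpha\in[-3,-2]$ (as opposed to the B\'aez-Duarte case $\alpha=-2$) is neither stated nor referenced precisely, so it cannot simply be cited. Second, and more seriously, your pull-back through $T_{(a,1)}$ for $a>1$ rests on the lower bound $\|T_{(a,1)}g\|_\alpha\gtrsim\|g\|_{\alpha+2}$ on the closed span $W$, which you yourself flag as ``the main obstacle'' and for which you only sketch two possible lines of attack without carrying either out. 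Since the symbol $(1-z)^{a-1}$ vanishes at $z=1$, $M_{(1-z)^{a-1}}$ is not bounded below on $D_\alpha$, so the ``moreover'' clause of Proposition \ref{noor} does not apply to $T_{(a,1)}$ and the needed estimate does not follow from anything established earlier. It is fair to note that the paper itself gives no proof beyond ``gathering all the previous results,'' and that its Proposition \ref{noor} only yields bijectivity of $T_{a,h}$ for multipliers bounded away from zero (i.e.\ literally only the case $a=1$ here); your diagnosis of the $a>1$ case is therefore a legitimate observation about the theorem as stated, but as a proof your proposal stops exactly where an argument is required.
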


\medskip

\section*{Acknowledgements}

The authors would like to thank Professor K. Seip for helpful discussions.


\end{document}